\newcommand{\mcN}{\mathcal{N}}
\newcommand{\mcC}{\mathcal{C}}
\newcommand{\mcL}{\mathcal{L}}
\newcommand{\mcP}{\mathcal{P}}
\newcommand{\mcI}{\mathcal{I}}
\newcommand{\mcR}{\mathcal{R}}
\newcommand{\mcS}{\mathcal{S}}
\newcommand{\C}{\mathbb{C}}
\newcommand{\R}{\mathbb{R}}
\newcommand{\N}{\mathbb{N}}
\newcommand{\ZZ}{\mathbb{Z}}
\newcommand{\ot}{\otimes}
\newcommand{\GL}{\operatorname{GL}}
\newcommand{\tn}{\textnormal}
\newcommand{\End}{\textnormal{End}}
\newcommand{\ott}{\bigotimes}
\newcommand{\bop}{\bigoplus}
\newcommand{\Tr}{\textnormal{Tr}}
\newcommand{\actson}{\curvearrowright}
\theoremstyle{plain}
\newtheorem{theorem}{Theorem}[section]
\newtheorem{corollary}[theorem]{Corollary}
\newtheorem{proposition}[theorem]{Proposition}
\newtheorem{lemma}[theorem]{Lemma}
\newtheorem{observation}[theorem]{Observation}
\newtheorem{problem}[theorem]{Problem}
\theoremstyle{definition}
\newtheorem{defn}[theorem]{Definition}
\theoremstyle{definition}
\theoremstyle{definition}
\newtheorem{example}[theorem]{Example}
\theoremstyle{definition}
\theoremstyle{definition}
\title{On Subtilings of Polyomino Tilings}
\author{Jacob Turner${}^\dagger$}
\thanks{${}^\dagger$ Korteweg-de Vries Institute for Mathematics, University of Amsterdam, 1098 XG Amsterdam, Netherlands.}
\begin{document}

 \begin{abstract}
 We consider a problem concerning tilings of rectangular regions by a finite library of polyominoes. We specifically look at rectangular regions of dimension $n\times m$ and ask whether or not a tiling of this region can be rearranged so that tiling of the $n\times m$ rectangle can be realized as a tiling of an $n\times m'$ rectangle and an $n\times m''$ rectangle, $m=m'+m''$. We call this a subtiling. We show that the associated decision problem is $\mathsf{NP}$-complete when restricted to rectangular polyominoes. We also show that for certain finite libraries of polyominoes, if $m$ is sufficiently large, a subtiling always exists and give bounds.
 \end{abstract}
  \maketitle
  \begin{center}{\small {\sc Keywords:} Polyomino; tilings; algebraic combinatorics}
\end{center}
\section{Introduction}

We begin by considering the following combinatorial optimization problem. Suppose one has a set of objects that one can place in some rectangular container without having any gaps. However, the container may be considered suboptimal as one of its dimensions is very large. Instead, one would rather place the objects into two containers, both of which have more reasonable dimensions. This can be done of course, but at the expense of packing efficiency; one may be forced to accept that there will be wasted space in at least one of the containers.

 We model this problem in terms of polyomino tilings. Suppose you are given a finite set of shapes $\mcL$, which we will take to be polyominoes. Recall that a polyomino is a region of $\R^2$, defined up to translation, consisting of squares of equal size arranged by coincident edges. In this paper, we take the squares making up a polyomino to all be the unit square.
 
 We say that a set of polyominoes $\mcL$ can tile a compact region $R\subseteq \R^2$ if there is a multiset with elements drawn from $\mcL$ such that they can be arranged in $R$ so the every point in $R$ is covered by an element of the multiset and no overlap of two different elements is two dimensional.
 
 We are interested in tiling rectangular regions of $\R^2$ by a set of polyominoes $\mcL$. We denote an $n\times m$ rectangular region by $R_{n\times m}$. Suppose we having a tiling of $R_{n\times m}$ by a multiset $T$ of polyominoes with elements drawn from $\mcL$.  In general there may be several ways that the elements of $T$ tile $R_{n\times m}$ and it might be possible do so by tiling the regions $R_{n\times m'}$ and $R_{n\times m''}$ where $m=m'+m''$, with  $m',m''\ne 0$, using the elements from the multiset $T$ situtating these two tiled regions side by side. If this happens, we say that tiling of $R_{n\times m}$ admits a \emph{subtiling}. If a subtiling exists once one or more of the polyominoes in $T$ has been replaced with a rotated copy of itself, we say the tiling admits a subtiling \emph{allowing rotations}.  We make the following definition:
 
 \begin{defn}
  Suppose one is given a tiling $\mathcal{T}$ of $R_{n\times m}$ by a multiset of polyominoes $T$. Any tiling of $R_{n\times m}$ by the same multiset $T$ is called a \emph{rearrangement} of $\mathcal{T}$. We consider $\mathcal{T}$ to be a rearrangement of itself. A tiling of $R_{n\times m}$ by a multiset $T'$, reached by replacing any of the elements in $T$ by a rotated copy, is called a rearrangement of $\mathcal{T}$ \emph{with rotations}. We emphasize that while $T$ may contain multiple copies of a particular polyomino, each copy can be rotated different degrees independent of each other.
 \end{defn}

  An equivalent way of defining a subtiling is that a tiling $\mathcal{T}$ of $R_{n\times m}$ can be rearranged to give a tiling of $R_{n\times m}$ that is formed by juxtaposing tilings of $R_{n\times m'}$ and $R_{n\times m''}$ with $m=m'+m''$ and $m',m''\ne 0$, side by side in one of the two obvious ways. The analagous statement holds for subtilings allowing rotations. This motivates the following definition.
 
 \begin{defn}
  Given a finite set of polynominoes, define $\beta_n(\mcL)$ to be the smallest natural number such that for any tiling of $R_{n\times m}$ by elements of $\mcL$ with $m>\beta_n(\mcL)$, the tiling admits a subtiling. If arbitrarily large tilings exist with no subtilings, then $\beta_n(\mcL):=\infty$. We define $\beta^R_n(\mcL)$ analagously for subtilings allowing rotations. Note that these numbers depend on $n$. 
 \end{defn}

The main problem of interest for us is finding upper bounds on $\beta_n(\mcL)$. This forces us to address a couple of other questions that we will attempt to give at least partial answers to.
\begin{itemize}
 \item How hard is it to tell if a given tiling admits a subtiling, with or without allowing rotations?
 \item For which $\mcL$ and which $n$ is $\beta_n(\mcL)$ and $\beta_n^R(\mcL)$ finite?
\end{itemize}

We can answer the first question for finite sets of rectangular polyominoes and investigation into the latter question will occupy much of this paper. In this paper, we find several sets of polyominoes $\mcL$ such that $\beta_n(\mcL)$ is finite and give bounds on it. We will not devote much space to $\beta_n^R(\mcL)$ in this paper, but we note that $\beta_n(\mcL)\ge\beta_n^R(\mcL)$. 

Since the popularization of polyominoes in the mid 20th century, there has been a vast amount of literature on the subject and subsequent generalizations to polyforms, especially the question of determining when polyforms tile given regions \cite{golomb1989polyominoes,de1969filling,klarner1969packing,barnes1982algebraic,barnes1982algebraic2,marshall1997packing,conway1990tiling}.

Many polyomino problems have been recreational: for example puzzles of Martin Gardner \cite{gardner1970mathematical,gardner1975more,gardner1969unexpected} or more recently, the Eternity puzzle \cite{peggeternity,selbyeternity}; a survey of the recreational aspects of the subject can be found in \cite{martin1991polyominoes}. Others are problems well known as favorites to give young mathematics students to solve on homeworks, e.g. the problems of tiling mutilated chess boards by dominos. Some of these problems can be quite difficult. 

In addition, polyomino tiling problems have played an important part in complexity theory where they have proven to be a source of $\mathsf{NP}$-hard or even undecidable problems \cite{berger1966undecidability,robinson1971undecidability,beauquier1995tiling,moore2001hard}. Furthermore, as polyomino tilings are ubiquitous in combinatorial optimization and operations research, they have even found their way into other scientific and industrial settings with applications ranging from genetics to antenna design \cite{takefuji2012neural,chambers1998practical,karademir2015irregular,chen2004error}.

The question of determining if a subtiling must exist is similar to the question of existence of fault-free tilings. A fault in a tiling of a rectangular region $R_{n\times m}$ (typically with dominos) is a horizontal or vertical line that can be drawn through the interior of $R_{n\times m}$ and does not pass through the interior of any tile. A fault is called vertical or horizontal if the defining line is vertical or horizontal, respectively. A \emph{fault-free tiling} is a tiling with no fault. The problem of finding fault-free tilings is credited  to Robert I. Jewett in \cite{golomb1965polyominoes} where it was also shown that a fault-free rectangle must have length exceeding four, with a $5\times 6$ rectangle yielding the smallest construction. An elegant proof is given that no such tiling exists for the $6\times 6$ rectangle. 

Necessary and sufficient conditions for a rectangular region to admit a fault-free tiling of dominos has been worked out \cite{graham1981fault}. Later, the rectangular polyominoes that admit fault-free tilings was completely classified \cite{robinson1982fault}. The condition of having a subtiling is different as we ignore horizontal faults, consider sets of many different polyominoes, and we allow a tiling to be rearranged.

This paper is organized as follows: In Section \ref{sec:preliminaries}, we formalize our problem as well as proving that determining if a tiling admits a subtiling is $\mathsf{NP}$-complete when $\mcL$ only consists of rectangular polyominoes. We also prove that some conditions on $\mcL$ implies $\beta_n(\mcL)$ is finite using direct arguments. In  Section \ref{sec:toric}, we establish a connection between the finiteness of $\beta_n(\mcL)$ and certain rings being Noetherian. We then prove that if a finite set of polyominoes satisfies a technical condition, then we can show $\beta_n(\mcL)$ will be finite. In Section \ref{sec:examples}, we prove that certain families of finite sets of polyominoes actually satisfy the technical condition in Section \ref{sec:toric}. In Section \ref{sec:bounds}, we find upper bounds for $\beta_n(\mcL)$ for special sets $\mcL$.

\section{Preliminaries and first results}\label{sec:preliminaries}

Throughout this paper, we will have a finite set of polyominoes $\mcL$ which will be used to tile a rectangular region. While it is natural for such a set $\mcL$ to be closed under reflections and rotations, our proofs only require that the set be closed under reflections across a vertical line (we call this a \emph{vertical reflection}). Many of the examples we will consider, however, will only involve sets of rectangular polyominoes and  is thus closed under reflections.

\begin{defn}
 If $\beta^R_n(\mcL)<\infty$, we say that $\mcL$ is $n$-\emph{Noetherian}. If $\beta_n(\mcL)<\infty$, we say that $\mcL$ is \emph{strongly} $n$-Noetherian. Being strongly $n$-Noetherian implies being $n$-Noetherian as $\beta_n^R(\mcL)\le\beta_n(\mcL)$.
\end{defn}

The justification of terminology of $n$-Noetherian will be given later in Section \ref{sec:toric}. It is clear that for every $n$, there exists a set $\mcL$ that is (strongly) $n$-Noetherian: let $\mcL$ consist solely of the $1\times 1$ domino. We now give an example of a set $\mcL$ that is not $2$-Noetherian.

\begin{example}\label{ex:non-noetherian}
 Let $\mcL$ be the set consisting of the following three polyominoes and their vertical reflections.:
 
 \begin{center}
 \begin{tikzpicture}
  \draw (0,0) rectangle (.25,.25);
  \draw (.25,0) rectangle (.5,.25);
  \draw (0,-.25) rectangle (.25,0);
  \draw (.75,0) node {,};
  \draw (1.25,0) rectangle (1.5,.25);
  \draw (1,-.25) rectangle (1.25,0);
  \draw (1.25,-.25) rectangle (1.5,0);
  \draw (1.75,0) node{,};
  \draw (2,-.25) rectangle (2.25,0);
  \draw (2.25,-.25) rectangle (2.5,0);
  \draw (2.25,0) rectangle (2.5,.25);
  \draw (2.5,0) rectangle (2.75,.25);
 \end{tikzpicture}
 \end{center}
 \noindent We can construct tilings of $R_{2\times N}$, for $N$ arbitrarily large, such that no subtiling exists. Consider the following general tiling:
 
 \begin{center}
  \begin{tikzpicture}
   \draw (0,0) rectangle (.25,.25);
  \draw (.25,0) rectangle (.5,.25);
  \draw (0,-.3) rectangle (.25,0);
  
    \draw (.35,-.3) rectangle (.6,-.05);
  \draw (.6,-.3) rectangle (.85,-.05);
  \draw (.6,-.05) rectangle (.85,.25);
  \draw (.85,-.05) rectangle (1.1,.25);
  \draw (1.5,0) node {$\cdots$};
  
  \draw (1.75,-.3) rectangle (2,0);
  \draw (2,-.3) rectangle (2.25,0);
  \draw (2.,0) rectangle (2.25,.25);
  \draw (2.25,0) rectangle (2.5,.25);
  
  \draw (2.6,-.05) rectangle (2.85,.25);
  \draw (2.6,-.3) rectangle (2.85,-.05);
  \draw (2.35,-.3) rectangle (2.6,-.05);
  \end{tikzpicture}
 \end{center}

 \noindent It is clear that this tiling has no subtiling, even if we allow all reflections and rotations, and can be made arbitrarily wide. 
\end{example}

\subsection{The computational complexity of deciding if a subtiling exists}

In Example \ref{ex:non-noetherian}, we could visually tell that those particular tilings had no subtiling. However, this is not an easy problem in general. This is in contrast to checking to see if a tiling is fault-free, which is very easy. We will show that even if we restrict ourselves to rectangular polyominoes, this problem is hard.

We are given as input a tiling of a rectangle $R_{n\times m}$ for some $n$ and $m$ by rectangular polyominoes. The polyominoes in the tiling are given as ordered pairs of binary numbers $(h,w)$ where $h$ is the height and $w$ the width of the corresponding polyominoes. The unit square in $R_{n\times m}$ that the bottom left corner of each polyomino occupies in the tiling is also specified as an ordered pair of binary numbers with the convention that $(0,0)$ denotes the bottom left unit square in $R_{n\times m}$. 

\begin{problem}
 Define the problem $\mathsf{TRANS-ST}$ as the problem of deciding if a tiling of $R_{n\times m}$ by rectangular polynominoes admits a subtiling where rearrangement is restricted to translations.
 The problem $\mathsf{GEN-ST}$ asks if there exists a subtiling allowing rotations.
\end{problem}

We show that $\mathsf{TRANS-ST}$ and $\mathsf{GEN-ST}$ are $\mathsf{NP}$-complete. Many polyomino packing problems are $\mathsf{NP}$-complete and often can be reduced to the subset sum problem. Here we will reduce to the following related $\mathsf{NP}$-complete problem.

\begin{problem}
 The partition problem is the problem of deciding if  a multiset $M$ of positive integers can be written as the disjoint union $S_1\sqcup S_2$ where $\sum_{s\in S_1}{s}=\sum_{s\in S_2}{s}$.
\end{problem}

\begin{theorem}[\cite{karp1972reducibility}]
 The partition problem is $\mathsf{NP}$-complete.
\end{theorem}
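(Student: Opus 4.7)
The plan is the standard two-part argument. For membership in $\mathsf{NP}$: given any proposed bipartition $M = S_1 \sqcup S_2$ as a certificate, one computes the two side-sums and compares them in time polynomial in the bit-length of the input. For $\mathsf{NP}$-hardness, I would reduce from $\mathsf{SUBSET\text{-}SUM}$ (itself $\mathsf{NP}$-complete by a reduction from $\mathsf{3SAT}$ as in \cite{karp1972reducibility}), which asks whether a given multiset $A$ of positive integers contains a subset summing exactly to a target $t$.

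The reduction proceeds as follows. Given a $\mathsf{SUBSET\text{-}SUM}$ instance $(A, t)$ with $s := \sum_{a \in A} a$, assume after trivial preprocessing that $0 < t < s$, and construct the partition instance
\[
 M \;:=\; A \;\cup\; \{\,s+t,\; 2s-t\,\}.
\]
Then $\sum M = 4s$, so any equal partition has each side summing to $2s$. The two new elements together exceed $2s$, so they are forced onto opposite sides. The side containing $2s-t$ is then completed by a subset $B \subseteq A$ with $\sum B = t$, and conversely any such $B$ yields an equal partition of $M$. The encoding is polynomial in the bit-length of the input since the two new elements have size $O(s)$, and correctness is immediate from the preceding observations.

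The main conceptual point, which one might call the only obstacle, is the design of the two auxiliary elements: they must simultaneously be large enough to forbid coexistence on one side of an equal partition (so no spurious partitions can arise that are unrelated to subset-sum solutions) and calibrated so that the residual sums on each side exactly encode the target $t$ and its complement $s-t$. The choice $\{s+t, 2s-t\}$ is the classical such calibration; any alternative auxiliary pair $\{u, v\}$ would need to satisfy $u + v > 2s$ and $v - u = s - 2t$ (up to swapping) to serve the same purpose.
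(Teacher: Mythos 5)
Your argument is correct: membership in $\mathsf{NP}$ is immediate, and the reduction $M := A \cup \{s+t,\, 2s-t\}$ from $\mathsf{SUBSET\text{-}SUM}$ works exactly as you describe (total $4s$, the two auxiliary elements sum to $3s > 2s$ and are forced apart, and the side containing $2s-t$ must be completed by a subset of $A$ summing to $t$). The paper itself gives no proof of this statement -- it simply cites Karp -- and your reduction is essentially the classical one from Karp's paper (there phrased as a reduction from KNAPSACK with auxiliary elements $t+1$ and $s-t+1$), so there is nothing substantive to contrast.
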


We note that for rectangular polyominoes, reflections are trivial. As such, the difference in $\mathsf{TRANS-ST}$ and $\mathsf{GEN-ST}$ is only that in $\mathsf{GEN-ST}$ we can attempt to rotate pieces in our rearrangements.

\begin{theorem}
 The problems $\mathsf{TRANS-ST}$ and $\mathsf{GEN-ST}$ are both $\mathsf{NP}$-complete.
\end{theorem}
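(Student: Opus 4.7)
Both problems clearly lie in $\mathsf{NP}$: a certificate consists of a split column $m'$ together with the new positions (and, in $\mathsf{GEN-ST}$, rotation flags) of every tile in a rearrangement; the verifier checks in polynomial time that this rearrangement uses the original multiset and that no tile crosses the vertical line at column $m'$.

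For $\mathsf{NP}$-hardness I plan to reduce from the partition problem. Given positive integers $a_1,\dots,a_k$ with $\sum_i a_i = 2T$, I would construct the tiling $\mathcal{T}$ of $R_{2\times 2T}$ whose bottom row is the sequence $1\times a_1, 1\times a_2, \dots, 1\times a_k$ placed consecutively, and whose top row consists of two adjacent $1\times T$ tiles; this description is plainly polynomial in the binary encoding of the partition instance. The claim I want to establish is that $\mathcal{T}$ admits a subtiling if and only if the $a_i$ admit a partition into two subsets of sum $T$. The backward direction is immediate: given subsets $A, B$ with $\sum A = \sum B = T$, I place the tiles indexed by $A$ in the bottom-left, those indexed by $B$ in the bottom-right, and one $1\times T$ tile on top of each, yielding a subtiling at $m' = T$.

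The forward direction is the main content. Since every tile has height $1$, a subtiling at column $m'$ must partition the tile multiset into four row-pieces with width-sums $m', m', 2T-m', 2T-m'$. Two $1\times T$ tiles cannot share a row-piece of width less than $2T$, so each sits alone in its row-piece, and a case analysis on which side the two $1\times T$ tiles go to shows that either both go to the left (forcing $m' \ge T$, with the four $a$-tile subsets having sums $m'-T, m'-T, 2T-m', 2T-m'$), both go to the right (the symmetric case), or one sits on each side (forcing $m' = T$ and $a$-tile subsets of sums $0, T, 0, T$). In every case the four $a$-tile subsets regroup into two subsets of sum $T$, yielding a partition solution. For $\mathsf{GEN-ST}$ I would apply the same reduction after first multiplying every $a_i$ by $3$, which is a partition-preserving transformation; every tile then has width at least $3$, so any $90^{\circ}$ rotation produces a piece of height at least $3$, which does not fit in a region of height $2$. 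Consequently no valid rearrangement with rotations can use a rotated tile, and the equivalence with the partition problem carries over. The main obstacle I foresee is precisely the forward-direction case analysis: one must carefully verify that for each feasible $m'$ the stated arithmetic on the four row-pieces really holds and that the four $a$-tile subsets always regroup correctly into two subsets of sum $T$; the rescaling by $3$ is merely a cheap device to neutralize rotations.
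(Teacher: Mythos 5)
Your proof is correct and follows the same overall strategy as the paper's: membership in $\mathsf{NP}$ by exhibiting and checking the rearrangement, and hardness by reduction from the partition problem, with two tiles of width equal to half the total sum forcing the fault to sit at the midpoint. The gadget is genuinely different, though. The paper tiles a $(2N+2)\times 2N$ region using one $(2N+1)\times m_i$ tile per partition element (so tall that its height exceeds the width of the region, which rules out rotating it) together with two $1\times N$ tiles in the top row, and must then argue separately that rotating a $1\times N$ tile into a single column leaves cells that cannot be covered. You instead work entirely in a height-$2$ strip with unit-height tiles and neutralize rotations for $\mathsf{GEN-ST}$ by tripling all widths, which is a cleaner device; the price is the four-row-piece case analysis in the forward direction, which you carry out correctly (the sums $m'-T$, $m'-T$, $2T-m'$, $2T-m'$ always regroup into two subsets of sum $T$, and the degenerate case $m'=T$ with one wide tile per side works the same way). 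Both reductions are valid; yours is arguably the more economical of the two.
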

\begin{proof}
Clearly both problems are in $\mathsf{NP}$ as a subtiling can be easily verified. We wish to reduce both problems to the partition problem. Suppose we are given a multiset of positive integers $M=\{m_1,\dots,m_\ell\}$ and $2N=\sum_{m\in M}{m}$ (if $\sum_{m\in M}{m}$ is not even, we know no partition is possible). Then let $\mcL$ be the set of polyominoes consisting of the $(2N+1)\times m_i$ polyomino for every $1\le i\le \ell$ and also the $1\times N$ polyomino.  Then we tile the region $R_{(2N+2)\times 2N}$ by placing the two $1\times N$ polyominoes in the first row and one of each $(2N+1)\times m_i$ for $1\le i\le \ell$  polyomino side by side the the bottom $2N+1$ rows. This is clearly a tiling of $R_{(2N+2)\times 2N}$. 
 
 With this tiling, rearrangements allowing rotations are impossible (apart from rotations of $n\pi$ radians). First of all, we cannot rotate any of the $(2N+1)\times m_i$ polyominoes by $\pi/2$ radians as their height exceeds the width of the rectangular region being tiled. We can rotate the $1\times N$ polyominoes $\pi/2$ radians, however. Suppose we do this for one of the $1\times N$ polyominoes.
 
 It now lies within a single column whose remaining $N+2$ rows must be occupied by other polynominoes. However, none of the $(2N+1)\times m_i$ polyominoes can share a column with the $N\times 1$ polyomino as the height of the rectangular region being tiled is $2N+2$. So it can only share a column with the other $1\times N$ polyomino, or if we rotate it, an $N\times 1$ polynomio. But in either scenario, we see that some of the rows of the column in question will be left unoccupied.
 
 So this reduction is valid for both $\mathsf{TRANS-ST}$ and $\mathsf{GEN-ST}$ since we are using rectangular polyominoes and reflections are trivial.
 Lastly, we note that a subtiling exists for this tiling if and only if the polyominoes of the form $1\times m_i$ can be partitioned into two disjoint sets $S_1$ and $S_2$ such that the lengths of the polyominoes in $S_1$ sum to $N$ and likewise for the lengths of the polyominoes in $S_2$. As such, a subtiling exists if and only if $M$ has a partition of the desired kind.
\end{proof}

\subsection{The case of a single rectangular polyomino}

Returning our attention to the question of determining $\mcL$ such that $\beta^R_n(\mcL)<\infty$, we first look at the most basic case of a single rectangular polyomino. The result we prove allows all translations, reflections, and rotations of the $a\times b$ polyomino. Necessary and sufficient conditions for an  $a\times b$ rectangular polyomino to tile $R_{n\times m}$ have been worked out previously in the literature and this can be used to decide when a rectangular polyomino is $n$-Noetherian for a particular $n$. In particular, we have the following theorem.

\begin{theorem}[\cite{de1969filling,klarner1969packing}]\label{thm:rect_pack}
 An $a\times b$ polyomino tiles $R_{n\times m}$ if and only if the following conditions are satisfied.
 \begin{enumerate}[(a)]
  \item Either $m$ or $n$ is divisible by $a$, and either $m$ or $n$ is divisible by $b$.
  \item Both $m$ and $n$ can be written as linear combinations of $a$ and $b$ with coefficients in $\N$.
 \end{enumerate}
\end{theorem}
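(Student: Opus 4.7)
My plan is to prove necessity via a polynomial / root-of-unity identity and sufficiency by a direct case-by-case construction.

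For necessity of (b), I would look at the top edge of $R_{n\times m}$: the tiles touching it partition the edge into consecutive segments of length $a$ or $b$, so summing their widths yields $m = au + bv$ for some $u, v \in \N$; the left edge gives the same for $n$. For necessity of (a), I would extract the polynomial identity
$$
(x^n - 1)(y^m - 1) \;=\; \sum_t x^{i_t} y^{j_t}(x^{h_t} - 1)(y^{w_t} - 1)
$$
in $\C[x,y]$, where tile $t$ has bottom-left corner $(i_t, j_t)$ and dimensions $(h_t, w_t) \in \{(a,b),(b,a)\}$. Substituting a primitive $a$-th root of unity $x = \zeta_a$ kills every summand with $h_t = a$, and the surviving summands each carry a factor of $(y^a - 1)$, yielding
$$
(\zeta_a^n - 1)(y^m - 1) \;=\; (y^a - 1)\,R(y)
$$
for some $R \in \C[y]$. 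Hence either $\zeta_a^n = 1$, giving $a \mid n$, or $(y^a - 1) \mid (y^m - 1)$ in $\C[y]$, giving $a \mid m$. The symmetric substitution $x = \zeta_b$ produces $b \mid n$ or $b \mid m$.

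For sufficiency, I would construct the tiling explicitly and reduce the four combinations allowed by (a) to two cases using the $m \leftrightarrow n$ symmetry. \emph{Case 1:} $a \mid n$ and $b \mid n$, so $\mathrm{lcm}(a,b) \mid n$. Using (b), write $m = au + bv$ and partition $R_{n\times m}$ into $R_{n\times au}$ and $R_{n\times bv}$: the former splits into $u$ vertical strips $R_{n\times a}$, each tiled by $n/b$ stacked $b\times a$ bricks, and the latter into $v$ vertical strips $R_{n\times b}$, each tiled by $n/a$ stacked $a\times b$ bricks. \emph{Case 2:} $a \mid n$ and $b \mid m$. Partition $R_{n\times m}$ into $n/a$ horizontal strips of shape $R_{a\times m}$, each tiled by $m/b$ side-by-side $a\times b$ bricks. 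The main conceptual obstacle is just confirming that these two cases, together with the $m \leftrightarrow n$ swap, cover every configuration permitted by (a); the bookkeeping in Case 1 (where condition (b) is genuinely needed for the dimension not divisible by $\mathrm{lcm}(a,b)$) is then routine.
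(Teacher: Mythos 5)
The paper does not prove this theorem---it is quoted from \cite{de1969filling,klarner1969packing}---so your argument stands on its own, and it is correct; it is in fact the standard proof of the Klarner--G\"obel/de Bruijn result. The edge argument for (b) is sound: every tile meeting the top edge has its top side flush with it, so the top edge is partitioned into segments of length $a$ or $b$, giving $m=au+bv$, and likewise for $n$. The root-of-unity argument for (a) is also correct: the displayed identity is the cell generating function multiplied by $(x-1)(y-1)$, substituting $x=\zeta_a$ annihilates exactly the summands with $h_t=a$, the survivors all carry the factor $(y^a-1)$ because $(h_t,w_t)\in\{(a,b),(b,a)\}$, and $(y^a-1)\mid(y^m-1)$ in $\C[y]$ iff $a\mid m$. (This step does presuppose that all tiles sit on the integer lattice, which is the paper's convention for polyomino tilings but is not automatic for arbitrary tilings of a rectangle by rectangles.) For sufficiency, the four configurations permitted by (a) are $\{a\mid n, b\mid n\}$, $\{a\mid m, b\mid m\}$, $\{a\mid n, b\mid m\}$, $\{a\mid m, b\mid n\}$; the first pair are your Case 1 and its transpose, the second pair your Case 2 and its transpose, and condition (b) supplies exactly the decomposition $m=au+bv$ (resp.\ $n=au+bv$) needed in Case 1. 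So the case analysis is complete and the proof goes through.
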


 In the following theorem, the property of being Noetherian actually comes from the fact that if we are given two natural numbers $a,b$ with greatest common divisor $d$, then for $m$ sufficiently large and divisible by $d$, it is always possible to write $m=ax+by$ for $x,y\in\N$ using the Euclidean algorithm. Let us denote the $a\times b$ polyomino by $p_{a\times b}$. The following result follows directly from Theorem \ref{thm:rect_pack}.

\begin{theorem}\label{thm:single_rectangle}
The set $\mcL=\{p_{a\times b}\}$ is $n$-Noetherian for every $n\in \N$.
\end{theorem}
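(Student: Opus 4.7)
The plan is to reduce the existence of a subtiling to a numerical question about $\N$-linear combinations of $a$ and $b$, using Theorem \ref{thm:rect_pack} as a black box in two directions. Given a tiling of $R_{n\times m}$ by copies of $p_{a\times b}$, Theorem \ref{thm:rect_pack} forces both $m$ and $n$ to be $\N$-linear combinations of $a$ and $b$, and forces at least one of $\{m,n\}$ to be divisible by $a$ and at least one to be divisible by $b$. To produce a subtiling with rotations, it then suffices to find a decomposition $m = m' + m''$ with $m', m'' \geq 1$ such that the hypotheses of Theorem \ref{thm:rect_pack} also hold for $R_{n\times m'}$ and $R_{n\times m''}$; the resulting tilings use $nm'/(ab)$ and $nm''/(ab)$ copies of $p_{a\times b}$ respectively, so their combined multiset agrees with the original multiset once we are permitted to rotate individual tiles.

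Next I would perform a case analysis on which of $a,b$ divide $n$. If $a\nmid n$, Theorem \ref{thm:rect_pack} forces $a \mid m$, and then any split with $a \mid m'$ (hence $a \mid m''$) preserves the $a$-divisibility hypothesis for both halves; the analogous remark applies to $b$. This handles three of the four cases cleanly: if $a\nmid n$ but $b \mid n$, take $m' = a$, $m'' = m - a$ (valid for $m \geq 2a$, since $m$ is a multiple of $a$ so $m''$ is again an $\N$-linear combination of $a,b$); symmetrically if $b\nmid n$ but $a \mid n$; and if neither $a$ nor $b$ divides $n$, then $\operatorname{lcm}(a,b) \mid m$, so one takes $m' = \operatorname{lcm}(a,b)$ and $m'' = m - \operatorname{lcm}(a,b)$, valid for $m \geq 2\operatorname{lcm}(a,b)$.

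The substantive case is when both $a \mid n$ and $b \mid n$: the divisibility hypotheses of Theorem \ref{thm:rect_pack} are then automatic from $n$, and the only requirement on the split is that both $m'$ and $m''$ be $\N$-linear combinations of $a$ and $b$. This is precisely where the remark preceding the theorem does the work: setting $d = \gcd(a,b)$, the Chicken McNugget / Frobenius fact (provable via the Euclidean algorithm) says every sufficiently large multiple of $d$ is an $\N$-linear combination of $a,b$. Since Theorem \ref{thm:rect_pack} already guarantees $d \mid m$, for $m$ exceeding twice the Frobenius threshold one can decompose $m$ into two halves that are each multiples of $d$ above that threshold and hence each a valid $\N$-linear combination. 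Taking the maximum of the four thresholds yields a finite upper bound on $\beta_n^R(\mcL)$. The one step that is more bookkeeping than obstacle is checking that the subtiling's multiset matches the original up to rotations, which is immediate from $nm/(ab) = nm'/(ab) + nm''/(ab)$ combined with the freedom to rotate individual tiles.
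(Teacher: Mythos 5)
Your proof is correct and follows essentially the same route as the paper's: both directions of Theorem \ref{thm:rect_pack}, a case split on which of $a,b$ divide $n$, and the Frobenius/Euclidean-algorithm fact in the case where both divide. Your version is in fact slightly more careful in two spots --- using $\operatorname{lcm}(a,b)$ where the paper writes $ab$, and explicitly checking that the split tilings recover the original multiset up to rotation --- but these are refinements of the same argument, not a different one.
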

\begin{proof}

There are three cases we consider. The first is that $n$ is divisible by neither $a$ nor $b$. Then if any tiling is to exist, $n=ax+by$, with $x,y\in \N$, otherwise no tilings exist and $p_{a\times b}$ is trivially $n$-Noetherian. If $n$ is of such a form, then if $p_{a\times b}$ tiles $R_{n\times m}$, $m$ must be divisible by $ab$ and can trivially be written as a linear combination of $a$ and $b$ with coefficients in $\N$. Then if $m>ab$ and $p_{a\times b}$ tiles $R_{n\times m}$, we have that $m=dab$ for $d>1$. Let $d=(d'+d'')$, $d',d''\ge 1$ and $m'=d'ab$ and $m''=d''ab$. Then by Theorem \ref{thm:rect_pack}, $R_{n\times m'}$ and $R_{n\times m''}$ are both tilable by $p_{a\times b}$ and so for any tiling of $R_{n\times m}$ with $m>ab$, the tiling admits a subtiling allowing rotations.

Now suppose $n$ is divisible by exactly one of $a$ and $b$, without loss of generality let it be $a$. Then trivially $n$ can be written as linear combination of $a$ and $b$ with coefficients in $\N$. Then if $p_{a\times b}$ tiles $R_{n\times m}$, we have that $b$ divides $m$. Then we repeat the argument above for when $m>b$. 

The last case is when $ab$ divides $n$. Once again, $n$ can be written as a linear combination of $a$ and $b$ with coefficients in $\N$. Let $\gcd(a,b)=d$ . Then if $p_{a\times b}$ tiles $R_{n\times m}$, we have that $m$ is divisible by $d$ by condition (b) of Theorem \ref{thm:rect_pack}.  There exists an $N\in\N$ (which we may assume is divisible by $d$) such that for every $m\ge N$ divisible by $d$, $m=ax+by$ for some $x,y\in\N$. Thus for $m\ge 2N$ divisible by $d$, we know that $R_{n\times m}$ admits a subtiling allowing rotations since we can choose $m',m''\ge N$, both divisible by $d$, and $R_{n\times m'}$ and $R_{n\times m''}$ are tilable by $p_{a\times b}$ by Theorem \ref{thm:rect_pack}.
\end{proof}

It is not too difficult to come up with bounds for $\beta_n(p_{a\times b})$. If $n$ is not divisible by $a$ nor $b$, then $\beta_n(p_{a\times b})=2ab$. Similarly, if $n$ is divisible by precisely one of $a$ or $b$, say $a$, then $\beta_n(p_{a\times b})=2b$. However, if $ab$ divides n, it is a bit trickier to determine $\beta_n(p_{a\times b})$.

\subsection{Sets of Tall Rectangular Polynominoes}

There is another family of rectangular polyominoes we can show are strongly $n$-Noetherian by elementary arguments. Let $\mcL$ be a set of rectangular polyominoes such that every element has height $>n/2$ and at most one polyomino of unit height. Let the polyomino of unit height have width dividing the greatest common divisor of the widths of all of the other polyominoes in $\mcL$. Then we have the following theorem.

\begin{theorem}
 If $\mcL$ is as above, then it is strongly $n$-Noetherian. Furthermore, $\beta_n(\mcL)$ equals the largest width of any polyomino in $\mcL$.
\end{theorem}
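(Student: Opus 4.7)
Let $W$ denote the largest width of any polyomino in $\mcL$ and let $w_0$ denote the width of the unit-height polyomino (if it appears in $\mcL$). The plan is to establish both $\beta_n(\mcL)\le W$ and $\beta_n(\mcL)\ge W$: the first by exhibiting an explicit vertical fault in every tiling of $R_{n\times m}$ with $m>W$, and the second by constructing a tiling of $R_{n\times W}$ admitting no subtiling. The central structural fact is that two tall polyominoes cannot share any column of $R_{n\times m}$, because each occupies more than $n/2$ rows and two together would exceed the available $n$. A short bound-check on the admissible bottom rows then shows that the row $r^{\ast}=\lfloor n/2\rfloor+1$ is covered by every tall polyomino, regardless of where it is placed.

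For the upper bound, fix a tiling $\mathcal T$ of $R_{n\times m}$ with $m>W$, and let $P$ be the polyomino of $\mathcal T$ containing the cell $(r^{\ast},1)$. Its width $w$ is at most $W$, hence strictly less than $m$, so the line $x=w$ lies in the interior of $R_{n\times m}$. I will show $x=w$ is a vertical fault of $\mathcal T$, from which the subtiling is obtained by splitting $\mathcal T$ into tilings of $R_{n\times w}$ and $R_{n\times(m-w)}$. The key auxiliary observation is that no tall polyomino of $\mathcal T$ occupies a column in $[1,w]$: any such polyomino would also cover the cell $(r^{\ast},c)$ for some $c\in[1,w]$, contradicting the fact that this cell already belongs to $P$. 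If $P$ is tall, then in rows of $P$'s vertical extent the line $x=w$ coincides with the right edge of $P$, while in every other row the auxiliary observation forces the columns $[1,w]$ to be tiled by unit-height pieces; an induction from column $1$ using the divisibility $w_0\mid w$ shows these pieces partition $[1,w]$ exactly and none crosses $x=w$. If $P$ is itself the unit-height piece, then $w=w_0$ and the same inductive argument applies in every row.

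For the lower bound, let $P^{\ast}$ be a tall polyomino of width $W$ with height $h^{\ast}$. Place $P^{\ast}$ in the bottom-left corner of $R_{n\times W}$ and fill the remaining height-$(n-h^{\ast})$ strip above it with unit-height pieces, which is possible since $w_0\mid W$. In the resulting tiling, any subtiling would decompose $R_{n\times W}$ into $R_{n\times m'}$ and $R_{n\times m''}$ with $m'+m''=W$ and both positive, forcing $P^{\ast}$ into one region of width strictly less than $W$, which is impossible. The principal difficulty is the ``$P$ is tall'' case of the upper bound: one must simultaneously invoke the middle-row argument to exclude other tall polyominoes from $[1,w]$ and the divisibility $w_0\mid w$ to prevent a rogue unit-height piece from straddling $x=w$. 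Neither ingredient alone suffices.
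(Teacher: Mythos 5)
Your proof is correct, and the upper-bound half takes a genuinely different route from the paper. The paper first \emph{rearranges} an arbitrary tiling into a canonical form (grouping the tall polyominoes into contiguous blocks by height, with the unit-height piece filling the complement), and only then splits off one tall polyomino together with the unit-height pieces stacked above it; the rearrangement freedom in the definition of subtiling is essential to that argument. You instead show the stronger statement that \emph{every} tiling of $R_{n\times m}$ with $m>W$ already has a vertical fault at $x=w$, where $w$ is the width of the piece covering $(r^\ast,1)$ and $r^\ast=\lfloor n/2\rfloor+1$ is the row every tall piece must meet --- so no rearrangement is needed at all. This buys a sharper conclusion (the set $\mcL$ admits no fault-free tilings of width exceeding $W$, connecting to the fault-free literature the introduction cites), at the cost of the slightly more delicate two-case analysis with the $w_0\mid w$ induction. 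The lower-bound constructions are essentially identical in both proofs. Two small points you share with the paper and need not worry about, but could flag: if $\mcL$ contains no tall polyomino the lower bound needs the trivial observation that a single column-block of unit-height pieces of width $w_0=W$ has no subtiling; and if $\mcL$ lacks the unit-height piece and no tall piece has height exactly $n$, then no tilings exist at all and the equality $\beta_n(\mcL)=W$ degenerates.
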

\begin{proof}
 Let us first focus on those polyominoes of height $>n/2$. It is clear that no two such polyominoes can occupy space in the same column of $R_{n\times m}$. As such, we shall arrange these polyominoes in $R_{n\times m}$ as follows.
 
 \begin{center}
 \begin{tikzpicture}
 \draw[thick,fill=gray] (0,0) rectangle (3.75,1);
  \draw[fill=white] (0,0) rectangle (.5,1);
  \draw (.25,.5) node{$n$};
  \draw[fill=white,draw=none] (1.5,0) rectangle (2,.5);
  \draw[fill=white] (.5,0) rectangle (1.5,.75);
  \draw (1,.4) node{$n-1$};
  
  \draw (1.75,.25) node{$\cdots$};
  \draw[fill=white] (2,0) rectangle (3.5,.5);
  \draw (2.75,.25) node{$\lfloor \frac{n}{2}\rfloor+1$};
   \end{tikzpicture}
\end{center}

In the picture, the rectangle labeled $n$ denotes a contiguous block of height $n$ rectangular polynominoes. The same for $n-1$, all the way down to $\lfloor\frac{n}{2}\rfloor+1$. The grey area is then filled with the copies of the polyomino of unit height. This can always be done as the width of this polyomino divides the width of any of the polyominoes of height $>n/2$ and thus divides the width of any of these contiguous blocks. 

This rearrangement admits a subtiling. The subtiling $R_{n \times m'}$ can be formed by taking any polyomino $p$ of height $>n/2$ and tiling the space above it with the polyomino of unit height; the remaining rectangle $R_{n\times m''}$ may be tiled in any way. The tiling of $R_{n\times m'}$ described admits no subtiling and $m'$ must in fact be the width of $p$. This shows that $\beta_n(\mcL)$ is in fact the maximum width of any rectangular polyomino in $\mcL$ of height $>n/2$.
\end{proof}

\section{Rephrasing the problem in terms of rings generated by monomials}\label{sec:toric}

We now recast this problem in terms of rings, justifying the use of the term ``Noetherian". Let us first restrict ourselve to rearrangements only allowing translations. Let $\mcL$ be a finite collection of polyominoes. We view each polyomino as an algebraically independent variable. We can form the formal commutative ring over $\C$ generated by elements of $\mcL$, denote it $\C[\mcL]$.

However, let us consider the subring $\mcR_n[\mcL]\subseteq \C[\mcL]$, whose homogeneous elements are  equivalence classes of tilings of $R_{n\times m}$ for a fixed $n$. Two tilings are equivalent if one can be rearranged to give the other using translations. This is equivalent to the commutativity of $\C[\mcL]$. Equivalence classes of tilings are monomials in the ring $\C[\mcL]$. In the following discussion we will say ``tiling" instead of ``equivalence class of tilings" when it is clear from context.

Given tilings of $R_{n\times m_1}$ and $R_{n\times m_2}$, denoted $T_1$ and $T_2$, respectively, we define $T_1*T_2$ to be the tiling of $R_{n\times (m_1+m_2)}$ given by adjacent juxtaposition of $T_1$ and $T_2$. There are two ways to naturally do this, but they are in the same equivalence class and so this product is well defined.

Now suppose we wanted to allow reflections and rotations in our rearrangements. Then we would make sure that $\mcL$ is closed under rotations and reflections. Then we consider a quotient ring of $\mcR_n[\mcL]$ by imposing the relation that any polyomino $p$ (viewed as a variable in $\C[\mcL]$) is equal to the polyominoes that can be reached by rotating and reflecting $p$. We denote this ring by $\tilde{\mcR}_n[\mcL]$.

It is clear that if $\mcL$ is strongly $n$-Noetherian then $\mcR_n[\mcL]$ is Noetherian. This is because every tiling of $R_{n\times m}$ for $m>\beta_n(\mcL)$ can be written as a product of tilings of $R_{n\times m'}$ and $R_{n\times m''}$ with $m=m'+m''$. So $\mcR_n$ is generated by finitely many monomials. Likewise, $\tilde{R}_n[\mcL]$ is Noetherian if $\mcL$ is $n$-Noetherian. We now show that the converse holds as well. 

\begin{proposition}\label{prop:just}
 The ring $\mcR_n[\mcL]$ is Noetherian if and only if $\mcL$ is strongly $n$-Noetherian. Likewise, $\tilde{\mcR}_n[\mcL]$ is Noetherian if and only if $\mcL$ is $n$-Noetherian.
\end{proposition}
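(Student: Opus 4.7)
The plan is to exploit the natural $\N$-grading on $\mcR_n[\mcL]$ by width. The forward implication is essentially the paragraph preceding the proposition: if $\beta_n(\mcL) < \infty$, then the finitely many equivalence classes of tilings of $R_{n \times m}$ with $m \leq \beta_n(\mcL)$ serve as $\C$-algebra generators (any wider tiling admits a subtiling, hence splits multiplicatively into pieces of strictly smaller width, and one iterates), so $\mcR_n[\mcL]$ is a quotient of a finitely generated polynomial $\C$-algebra and is Noetherian by Hilbert's basis theorem. The same argument applies verbatim to $\tilde{\mcR}_n[\mcL]$.

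For the converse I would first observe that $\mcR_n[\mcL] = \bigoplus_{m \geq 0}(\mcR_n[\mcL])_m$ is a connected $\N$-graded $\C$-algebra: $(\mcR_n[\mcL])_0 = \C$ (the empty tiling is the identity), and each $(\mcR_n[\mcL])_m$ is finite-dimensional, being spanned by the finite set of equivalence classes of tilings of $R_{n \times m}$. For such an algebra, Noetherianity forces finite generation as a $\C$-algebra: the augmentation ideal $\bigoplus_{m \geq 1}(\mcR_n[\mcL])_m$ is finitely generated as an ideal, and if $g_1,\dots,g_r$ are a choice of homogeneous ideal generators, a routine induction on degree shows the $g_i$ generate $\mcR_n[\mcL]$ as a $\C$-algebra.

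I would next pass from abstract homogeneous generators to generating tilings by replacing each $g_i$ with the tilings appearing in its monomial expansion, obtaining a finite set $T_1,\dots,T_k$ of generating tilings of widths $m_1,\dots,m_k$; set $M := \max_i m_i$. The crucial structural fact is that the monomial basis of $\mcR_n[\mcL]$ is closed under multiplication, since the product of two tilings is literally the equivalence class of their side-by-side juxtaposition. Hence the subalgebra generated by the $T_i$ is spanned as a $\C$-vector space by the products $T_{i_1} * \cdots * T_{i_\ell}$, each of which is itself a single tiling. Any tiling $T$ of $R_{n \times m}$ must therefore coincide, as an equivalence class, with at least one such product, which by the definition of $\mcR_n[\mcL]$ means that some translation-rearrangement of $T$ is literally the juxtaposition $T_{i_1} * \cdots * T_{i_\ell}$. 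When $m > M$, additivity of widths forces $\ell \geq 2$, producing a subtiling, so $\beta_n(\mcL) \leq M$. The argument for $\tilde{\mcR}_n[\mcL]$ is word-for-word the same after replacing translation-rearrangements by rearrangements allowing rotations.

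The main delicate point I anticipate is the passage from ``$[T]$ lies in the subalgebra generated by the $T_i$'' to ``$[T]$ equals a single product of the $T_i$'': this is exactly where one needs closure of the monomial basis under multiplication, and it is precisely what converts the algebraic factorization of $[T]$ inside $\mcR_n[\mcL]$ into a geometric subtiling of $T$.
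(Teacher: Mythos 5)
Your proof is correct and follows essentially the same route as the paper: the forward direction is the finite-generation observation already made before the proposition, and the converse rests on the fact that a Noetherian connected graded monomial algebra is finitely generated by monomials of bounded degree, so that any wider tiling-monomial factors, yielding a subtiling. You fill in the details (connected grading, passage from homogeneous to monomial generators, linear independence and multiplicative closure of the monomial basis) that the paper compresses into its remark about homogeneous binomial relations.
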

\begin{proof}
We have already discussed one direction for the two rings. Now let us suppose that $\mcR_n[\mcL]$ is Noetherian. Since $\mcR_n[\mcL]$ is ring generated by monomials in algebraically independent variables, it is well know that the ideal of  relations are generated by those of the form $X_1*X_2=Y_1*Y_2$ where $X_1$, $X_2$, $Y_1$, and $Y_2$ are monomials. These are called binomial relations. This corresponds to the observation that a monomial may be factored into smaller monomials non-uniquely. Since the ideal of relations is a homogeneous ideal, if $\mcR_n[\mcL]$ is Noetherian, then it must be the case that for sufficiently large $N$, if $T$ is monomial of degree at least $N$ then $T=T_1*T_2$. This implies that $\mcL$ is strongly $n$-Noetherian. For the ring $\tilde{\mcR}_n[\mcL]$, the argument is the same where we note that the kernel of the map taking $\mcR_n[\mcL]$ to $\tilde{\mcR}_n[\mcL]$ was also generated by binomial relations.
\end{proof}

Proving the ring $\C[\mcL]$ is Noetherian for various $\mcL$ is too hard, so we instead use another ring whose Noetherianess will imply $\C[\mcL]$ is Noetherian. This ring will not be isomorphic to $\C[\mcL]$ in general and will restrict the translations we can make in rearranging tilings to only horizontal translations. To construct this new ring, we assume that $\mcL$ only contains rectangular polyominoes.

Given $n\in\N$, let $I(n):=\{(\ell,\ell+1,\dots,\ell+k)|\;\ell,\ell+k\in [n]\}$, where $[n]=(1,\dots,n)$, be the set of all contiguous intervals contained in $[n]$. For an interval $\iota=(\ell,\dots, \ell+k)\in I(n)$, let $|\iota|=k$ be the length of that interval. Given a polyomino of height $k$, we will create several different variables associated to it, one for each element of $I(n)$ of length $k$. Let $\omega$ be the maximum width of any polyomino in $\mcL$. Let us define the following vector space:
$$V(n):=\bop_{\iota\in I(n)}{\End(V_{\iota})},$$
$$\End(V_{\iota}):=\ott_{j=1}^{|\iota|}{\End(\C^{\omega})}.$$

The coordinate ring of $V(n)$ is isomorphic to $\ott_{\iota\in I(n)}{\C[\End(V_{\iota})]}$, where the ring $\C[\End(V_{\iota})]$ is the coordinate ring of $\End(V_{\iota})$.
Let us fix $\iota\in I(n)$. Then for every polyomino $p\in \mcL$ of height $|\iota|$, we associate to it the polynomial $\Tr(M_{\iota}^w)$ where $w$ is the width of $p$ and $M_{\iota}$ is the matrix of variables in $\End(V_{\iota})$. Then let $\C[\mcL]_{\iota}\subseteq \C[\End(V_{\iota})]$ be the subring generated by $1$ and these polynomials. Then we define $\mcP_n[\mcL]:=\ott_{\iota\in I(n)}{\C[\mcL]_\iota}$ which is a subring of the coordinate ring of $V(n)$.

We can think of $\mcP_n[\mcL]$ as a ring generated by tuples $(p,\iota)$ where $p\in\mcL$ and $\iota$ is a tuple containing the rows of $R_{n\times m}$ $p$ must be placed in. For this reason, the rearrangements will only consist of horizontal translations since row data is implicit in each variable. We would now like to show that these variables are algebraically independent. For that we need the following result.

\begin{theorem}[\cite{procesi1976invariant,kraft2000classical}]\label{thm:traceind}
 For the generic matrix $M\in \End(\C^n)$, the polynomials $\Tr(M),\dots,\Tr(M^n)$ are algebraically independent.
\end{theorem}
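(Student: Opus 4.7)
The plan is to restrict the generic matrix to the diagonal subalgebra of $\End(\C^n)$ and thereby reduce the statement to the classical algebraic independence of the first $n$ power-sum symmetric polynomials. Concretely, view $M = (x_{ij})$ as a matrix of $n^2$ indeterminates, so that each $\Tr(M^k)$ is a specific element of $\C[x_{ij}]$. The substitution $x_{ij} \mapsto 0$ for $i \neq j$ and $x_{ii} \mapsto \lambda_i$ defines a ring homomorphism $\C[x_{ij}] \to \C[\lambda_1, \ldots, \lambda_n]$ under which $\Tr(M^k)$ is sent to the power-sum symmetric polynomial $p_k(\lambda) = \sum_{i=1}^{n} \lambda_i^k$. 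Any hypothetical polynomial relation $F(\Tr(M), \ldots, \Tr(M^n)) = 0$ in $\C[x_{ij}]$ would then specialize to $F(p_1, \ldots, p_n) = 0$ in $\C[\lambda_1, \ldots, \lambda_n]$, so it suffices to prove that $p_1, \ldots, p_n$ are algebraically independent over $\C$.

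To establish that, I would apply the Jacobian criterion: $n$ polynomials in $n$ variables over a characteristic-zero field are algebraically independent if and only if their Jacobian determinant is not identically zero. Here $\partial p_k/\partial \lambda_j = k\lambda_j^{k-1}$, so the Jacobian matrix factors as $\operatorname{diag}(1,2,\ldots,n)$ times the Vandermonde-type matrix $(\lambda_j^{k-1})_{k,j=1}^{n}$, with determinant $n!\prod_{1 \le i<j \le n}(\lambda_j - \lambda_i)$, which is a nonzero element of $\C[\lambda_1,\ldots,\lambda_n]$. A more ring-theoretic alternative is to invoke Newton's identities, which express each elementary symmetric polynomial $e_k$ as a polynomial in $p_1, \ldots, p_k$ with unit leading coefficient. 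Combined with the fundamental theorem of symmetric functions (that $e_1, \ldots, e_n$ are algebraically independent and generate the ring of symmetric polynomials as a polynomial algebra over $\C$), this forces $\C[p_1, \ldots, p_n] = \C[e_1, \ldots, e_n]$ to be a polynomial ring on $n$ generators, so $p_1, \ldots, p_n$ are algebraically independent.

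The main obstacle is really just bookkeeping: one has to justify the Jacobian criterion carefully, or equivalently carry out the Newton-identity inversion, after which the argument reduces to a short Vandermonde computation. I would expect the Newton-identity route to give the cleanest writeup, since it is purely ring-theoretic and sidesteps any differential-calculus setup; the Jacobian route is slightly more direct but requires citing a standard transcendence-degree result in characteristic zero.
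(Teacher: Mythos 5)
The paper does not prove Theorem \ref{thm:traceind}; it is quoted as a known result from the cited references. Your argument --- specializing the generic matrix to a diagonal one so that $\Tr(M^k)$ becomes the power sum $p_k$, and then establishing the algebraic independence of $p_1,\dots,p_n$ via the Vandermonde Jacobian or Newton's identities --- is correct and is essentially the standard proof found in those sources, so there is nothing to fix.
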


\begin{proposition}\label{prop:poly}
 The ring $\mcP_n[\mcL]$ is a Noetherian polynomial ring.
\end{proposition}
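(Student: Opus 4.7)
The plan is to exhibit $\mcP_n[\mcL]$ explicitly as a polynomial ring over $\C$ in finitely many variables, after which Noetherianity is immediate by the Hilbert Basis Theorem.

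First I would fix an interval $\iota\in I(n)$ and analyze the factor $\C[\mcL]_\iota$. The key observation is a dimension count: since $V_\iota=(\C^\omega)^{\otimes|\iota|}$ has dimension $\omega^{|\iota|}$, the generic matrix $M_\iota$ is an $\omega^{|\iota|}\times\omega^{|\iota|}$ matrix of independent variables inside $\C[\End(V_\iota)]$. Applying Theorem~\ref{thm:traceind} to $M_\iota$, the entire list
$$\Tr(M_\iota),\; \Tr(M_\iota^2),\; \ldots,\; \Tr(M_\iota^{\omega^{|\iota|}})$$
is algebraically independent in $\C[\End(V_\iota)]$. Because every polyomino in $\mcL$ has width at most $\omega\le \omega^{|\iota|}$, the generating set $\{\Tr(M_\iota^w):p_{|\iota|\times w}\in\mcL\}$ of $\C[\mcL]_\iota$ is a subset of the above algebraically independent family. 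Hence $\C[\mcL]_\iota$ is itself a polynomial ring in finitely many variables.

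Second, I would assemble the factors. For distinct $\iota,\iota'\in I(n)$, the endomorphism spaces $\End(V_\iota)$ and $\End(V_{\iota'})$ are disjoint direct summands of $V(n)$, so the entries of $M_\iota$ and $M_{\iota'}$ are disjoint sets of coordinate functions on $V(n)$. Consequently the tensor product
$$\mcP_n[\mcL] \;=\; \ott_{\iota\in I(n)}\C[\mcL]_\iota$$
is, as a subring of the coordinate ring of $V(n)$, a polynomial ring whose generator set is the disjoint union of the generators of each $\C[\mcL]_\iota$. Since $|I(n)|$ is finite and each $\C[\mcL]_\iota$ has at most $|\mcL|$ generators, $\mcP_n[\mcL]$ is a polynomial ring over $\C$ in finitely many variables. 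The Hilbert Basis Theorem then gives Noetherianity.

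There is no serious obstacle; the one subtlety worth emphasizing is the correct reading of the dimension of $V_\iota$. A naive application of Theorem~\ref{thm:traceind} with a matrix of size $|\iota|$ or $\omega$ could fail to supply enough algebraically independent traces when a polyomino's width exceeds these quantities. The point of defining $V_\iota$ as the $|\iota|$-fold tensor power of $\C^\omega$ is precisely that the dimension $\omega^{|\iota|}$ is large enough to accommodate $\Tr(M_\iota^w)$ for every width $w\le\omega$ appearing in $\mcL$, so that all generators remain algebraically independent.
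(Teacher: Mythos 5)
Your proof is correct and takes essentially the same route as the paper: reduce to each tensor factor $\C[\mcL]_\iota$, invoke Theorem~\ref{thm:traceind} for the generic matrix on $V_\iota$ to get algebraic independence of the generating trace powers, and conclude Noetherianity from finite generation. The only discrepancy is cosmetic: the paper calls $M_\iota$ a generic $(|\iota|\cdot\omega)\times(|\iota|\cdot\omega)$ matrix whereas you compute the size as $\omega^{|\iota|}\times\omega^{|\iota|}$ (which is what the definition $\End(V_\iota)\cong\End((\C^\omega)^{\ot|\iota|})$ actually gives); in either reading the matrix size is at least $\omega$, so the appeal to Theorem~\ref{thm:traceind} goes through.
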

\begin{proof}
 That $\mcP_n[\mcL]$ is Noetherian is clear as it has finitely many generators. Since $\mcP_n[\mcL]=\ott_{\iota\in I(n)}{\C[\mcL]_\iota}$, it suffices to show that $\C[\mcL]_\iota$ is a polynomial ring for any $\iota\in I(n)$. We know that $\C[\mcL]_\iota$ is generated by polynomials of the form $\Tr(M_\iota^w)$ where $w\le\omega$. The matrix $M_\iota$ is the generic $(|\iota|\cdot\omega)\times (|\iota|\cdot\omega)$ matrix and so by Theorem \ref{thm:traceind}, these polynomials are algebraically independent. Thus $\C[\mcL]_{\iota}$ is a polynomial ring.
\end{proof}

Consider the vector space $\End(\C^{\omega n})\cong \ott_{i=1}^n{\End(\C^\omega)}:=\ott_{i=1}^n{\End(W_i)}$. Let $(\lambda_1,\dots,\lambda_{n-1})\in(\C^{\times})^{n-1}$ act on $\End(W_i)$ by multiplication by $\lambda_i$ if $i\ne n$ and otherwise by multiplication by $(\lambda_1\cdots\lambda_{ n-1})^{-1}$. This induces an action on $\mcP_n(\mcL)$ as follows. Given the generator $\Tr(M_{\iota}^w)$, $(\lambda_1,\dots,\lambda_{n-1})$ multiplies this polynomial by $$\prod_{i\in\iota}{\lambda_i^w}\tn{ if }n\notin \iota\tn{ and}$$
$$(\lambda_1\cdots\lambda_{n-1})^{-w}\prod_{i\in \iota\setminus \{n\}}{\lambda_i^w}\tn{ otherwise.}$$ The group $(\C^{\times})^{n-1}$ is a semi-simple algebraic group and this action is clearly rational. This means we can use Hilbert's famous result on invariant rings.

\begin{theorem}[\cite{hilb:93,hilbert1890theorie}]\label{thm:hilbnoether}
 For a commutative Noetherian $\C$-algebra $R$ acted on rationally by semi-simple algebraic group $G$, the invariant ring $R^G$ is Noetherian.
\end{theorem}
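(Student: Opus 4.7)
My plan is to deduce this from Hilbert's basis theorem by constructing the Reynolds operator for the $G$-action on $R$ and then using it to transfer finite generation of ideals from $R$ down to $R^G$. The argument has two ingredients: a representation-theoretic construction that uses only semi-simplicity (linear reductivity) of $G$, and a short ideal-theoretic descent that uses only Noetherianity of $R$.

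Construction of the Reynolds operator. Rationality of the action means that every element of $R$ lies in some finite-dimensional $G$-stable subspace $V \subseteq R$. Since $G$ is semi-simple over $\C$ it is linearly reductive, so each such $V$ decomposes canonically as $V = V^G \oplus V'$, where $V'$ is the sum of the non-trivial isotypic components. These canonical splittings are functorial under inclusions of $G$-stable subspaces, so they assemble into a global $\C$-linear decomposition $R = R^G \oplus R_+$. Define $\rho : R \to R^G$ to be the projection onto the first summand. The crucial property is the Reynolds identity
\[
\rho(fg) \;=\; f\, \rho(g) \qquad \text{for all } f \in R^G,\ g \in R,
\]
which holds because multiplication by an invariant $f$ is a morphism of $G$-representations, and any such morphism preserves both the invariant and the non-invariant isotypic summand by Schur's lemma. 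In particular $\rho$ is $R^G$-linear and restricts to the identity on $R^G$.

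Transfer of Noetherianity. Let $J$ be an arbitrary ideal of $R^G$, and let $\tilde{J} := JR$ denote its extension to $R$. Because $R$ is Noetherian, $\tilde{J}$ is finitely generated; since $J$ itself spans $\tilde J$ as an $R$-module, we may choose generators $f_1, \dots, f_k$ inside $J$. For any $f \in J$, write $f = \sum_i r_i f_i$ with $r_i \in R$, and apply $\rho$. Using $\rho(f) = f$ and the Reynolds identity we get
\[
f \;=\; \rho(f) \;=\; \sum_{i=1}^k \rho(r_i)\, f_i, \qquad \rho(r_i) \in R^G,
\]
so $f$ lies in the $R^G$-ideal generated by $f_1, \dots, f_k$. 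Hence every ideal of $R^G$ is finitely generated, i.e.\ $R^G$ is Noetherian.

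The main obstacle is really the construction of $\rho$ and the verification of its $R^G$-linearity: once those are in place, the descent is a one-line application of Hilbert's basis theorem. In the setting actually used in the paper, $G = (\C^{\times})^{n-1}$ is a torus, so the isotypic decomposition of $R$ is just the weight decomposition of a rational torus representation, and both the existence of $\rho$ and the Reynolds identity are completely transparent (one simply discards all non-trivial weight spaces); this is what makes the invocation of the theorem harmless in the rest of the paper.
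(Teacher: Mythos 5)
Your proof is correct and is the standard Reynolds-operator argument for this classical theorem, which the paper simply cites to Hilbert rather than proving: one splits $R=R^G\oplus R_+$ via the isotypic decomposition, verifies $R^G$-linearity of the projection, and descends finite generation of extended ideals. Your closing remark is also apt --- the argument only needs linear reductivity, which is what actually covers the group $G_n=(\C^{\times})^{n-1}$ used in the paper (a torus is reductive but not semi-simple in the strict sense, so your version is in fact the one the paper needs).
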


We now look at the closely related action of $(\lambda_1,\dots,\lambda_n)\in(\C^{\times})^{n}$ on $\End(W_i)$ by multiplication by $\lambda_i$, even when $i=n$. This action again extends to an action on $\mcP_n[\mcL]$. This action induces a multigrading on the ring. Given a monomial $X\in\mcP_n[\mcL]$, we say that is is \emph{multihomogeneous} of degree $(w_1,\dots,w+n)$ if $(\lambda_1,\dots,\lambda_n).X=(\lambda_1^{w_1}\cdots\lambda_n^{w_n}) X$.

\begin{defn}
 We say that a set of rectangular polyominoes $\mcL$ is $n$-\emph{representable} if $p_1\cdots p_\ell$ is a monomial in $\mcP_n[\mcL]$ of homogeneous degree $(w_1,\dots,w_m)$, then there is a tiling of $R_{n\times m}$, $m=\sum_{i=1}^n{w_i}$ by the polyominoes associated to $p_1,\dots,p_\ell$ which respects their row assignments.
\end{defn}

In the above definition, we remind the reader that polynomials $p_1,\dots,p_\ell$ correspond to polyominoes and a designation of which rows the polyomino must be placed in. We need that multihomogeneous polynomials always correspond to tilings of rectangular regions. We can now present our main result.

\begin{theorem}\label{thm:main}
 If $\mcL$ is $n$-representable, then $\mcL$ is strongly $n$-Noetherian.
\end{theorem}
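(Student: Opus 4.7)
The plan is to apply Hilbert's finite generation theorem (Theorem~\ref{thm:hilbnoether}) to the action of the torus $(\C^\times)^{n-1}$ on the polynomial ring $\mcP_n[\mcL]$ from Proposition~\ref{prop:poly}, and then to use $n$-representability as a dictionary between invariant monomials of $\mcP_n[\mcL]$ and actual tilings of $R_{n\times m}$.

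First I would identify the invariants. Since each generator $\Tr(M_\iota^w)$ of $\mcP_n[\mcL]$ is a torus eigenvector, every monomial in these generators is itself an eigenvector; a direct computation shows that a monomial of multihomogeneous degree $(w_1,\dots,w_n)$ transforms under $(\lambda_1,\dots,\lambda_{n-1})$ by the character $\prod_{i<n}\lambda_i^{\,w_i-w_n}$, so invariance is equivalent to the ``balanced'' condition $w_1=\cdots=w_n$. Hilbert's theorem makes $S:=\mcP_n[\mcL]^{(\C^\times)^{n-1}}$ finitely generated as a $\C$-algebra, and the usual eigenvector/isotypic argument lets me choose a finite generating set consisting of invariant monomials $X_1,\dots,X_k$; every invariant monomial is then a product (in the monoid of monomials) of the $X_j$'s. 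Let $m_j$ be the common row-width of $X_j$ and set $M:=\max_j m_j$.

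Next I would use $n$-representability in both directions. Any tiling $\mathcal{T}$ of $R_{n\times m}$ records an invariant monomial $X_{\mathcal{T}}\in S$ by reading off the (polyomino, occupied row interval) data of each tile, and conversely every invariant monomial in $S$ — in particular each $X_j$ — realizes a genuine tiling of some $R_{n\times m_j}$. If $m>M$, then $X_{\mathcal{T}}$ cannot be a single $X_j$ and so factors nontrivially as $X_{\mathcal{T}}=X_{j_1}\cdot(X_{j_2}\cdots X_{j_s})$ with $s\geq 2$. Setting $m':=m_{j_1}\geq 1$ and $m'':=m-m'\geq 1$, $n$-representability turns $X_{j_1}$ and $X_{j_2}\cdots X_{j_s}$ into tilings of $R_{n\times m'}$ and $R_{n\times m''}$ whose horizontal juxtaposition uses the same multiset of polyominoes as $\mathcal{T}$ (because the product of the corresponding monomials is $X_{\mathcal{T}}$), and hence is a rearrangement of $\mathcal{T}$ that witnesses a subtiling. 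This yields $\beta_n(\mcL)\leq M<\infty$.

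The main obstacle is justifying that the algebra generators of $S$ produced by Hilbert's theorem may be taken to be actual monomials, so that a factorization of invariants is a genuine factorization of tile multisets rather than some nontrivial polynomial identity; this relies on the diagonal, eigenvector structure of the torus action on the generators of $\mcP_n[\mcL]$, which reduces the invariant algebra to the semigroup algebra of an affine submonoid of $\N^k$. Once this is in place the rest is bookkeeping: $n$-representability is precisely the bridge converting monomial factorizations into side-by-side tile factorizations, and Proposition~\ref{prop:just} then confirms that a finite bound on $\beta_n(\mcL)$ is equivalent to the strong $n$-Noetherian conclusion.
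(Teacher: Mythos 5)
Your proposal is correct and follows essentially the same route as the paper: apply Theorem~\ref{thm:hilbnoether} to the $(\C^\times)^{n-1}$-action on $\mcP_n[\mcL]$, identify the invariants with the balanced multihomogeneous monomials, and use $n$-representability plus the monomial-factorization argument (as in Proposition~\ref{prop:just}) to produce a subtiling. Your extra care in justifying that the generators of the invariant ring may be taken to be monomials is a point the paper handles more tersely by citing the argument of Proposition~\ref{prop:just}, but it is the same idea.
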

\begin{proof}
 The ring $\mcP_n[\mcL]$ is Noetherian and by Theorem \ref{thm:hilbnoether}, so is $\mcP_n[\mcL]^{G_n}$, where $G_n=(\C^{\times})^{n-1}$ acts on $\mcP_n[\mcL]$ as described above. It is clear that the homogeneous elements of $\mcP_n[\mcL]^G$ are the multihomogeneous elements of degree  $(m,\dots,m)$ for $m\in \N$. Then if $\mcL$ is $n$-representable, the homogeneous elements of $\mcP_n[\mcL]^{G_n}$ correspond to equivalence classes of tilings of rectangular regions. Since this ring is Noetherian and generated by monomials of algebraically independent variables (by Proposition \ref{prop:poly}), we can use the same argument as in the proof of Proposition \ref{prop:just} to say that every monomial of sufficiently large degree must factor into two monomials of smaller degree. This implies that $\mcL$ is $n$-Noetherian and furthermore, the arrangements only allow horizontal translations. So it is strongly $n$-Noetherian.
\end{proof}

\section{Examples of representable sets of polyominoes}\label{sec:examples}

In the previous section we showed that $n$-representable sets of polyominoes are strongly $n$-Noetherian. However, the condition of being $n$-representable is rather opaque. In this section, we discuss some concrete sets of polyominoes that are $n$-representable for specific $n$.

Let us first expound a bit further on the definition of $n$-representable. Suppose we are given a multiset $\mcP=\{(p_{a\times b},\iota)|\;p_{a\times b}\in\mcL,\;\iota\in I(n)\}$ where $\mcL$ is finite set of rectangular polyominoes. As before, we use $p_{a\times b}$ to mean the $a\times b$ rectangular polyomino. The tuple $(p_{a\times b},\iota)$ indicates the rows in $R_{n\times m}$ that $p_{a\times b}$ should span. Then suppose the following conditions hold:
\begin{equation}
\begin{split}
&\sum_{(p_{a\times b},\iota)\in\mcP}{ab}=nm\tn{ and}\\
&\sum_{\stackrel{(p_{a\times b},\iota)\in\mcP}{i\in\iota}}{b}=m\tn{ for all }1\le i\le n.
\end{split}
\end{equation}\label{eq:rep}
Then the set $\mcL$ is $n$-representable if whenever the above two conditions are met, $\mcP$ tiles $R_{n\times m}$, respecting the row assignments of the polyominoes, 

\begin{observation}
 Note that if $\mcL$ consists only of polyominoes of the form $1\times k$, it is $n$-representable for any $n$.
\end{observation}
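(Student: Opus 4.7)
The plan is to reduce the problem to a purely one-dimensional observation: when every polyomino in $\mcL$ has height $1$, the rows of $R_{n\times m}$ decouple entirely and the tiling problem splits into $n$ independent one-dimensional problems that each have an obvious solution.

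Concretely, I would first note that since every $p \in \mcL$ has height $1$, the only intervals $\iota \in I(n)$ that can appear in a tuple $(p_{1\times k}, \iota) \in \mcP$ are the singletons $\iota = (i)$ for $1 \le i \le n$. So the multiset $\mcP$ naturally partitions as $\mcP = \bigsqcup_{i=1}^n \mcP_i$, where $\mcP_i$ consists of those tuples with $\iota = (i)$. The second displayed condition in (\ref{eq:rep}) specialized to this situation says that for each fixed $i$, the widths $b$ of the polyominoes in $\mcP_i$ sum to $m$ (and conversely the first condition is then automatic, being the sum over $i$ of the second).

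Next, for each row $i$, I would place the $1\times b$ polyominoes listed in $\mcP_i$ side by side along the $i$-th row of $R_{n\times m}$ in any order. Since their widths are natural numbers summing to $m$, they exactly fill the $1\times m$ strip occupying row $i$ without overlap or gap. Taking the union of these $n$ row-tilings produces a tiling of $R_{n\times m}$, and by construction each polyomino lies in precisely the row specified by its label $\iota$, so the row assignments are respected.

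Since this argument works for any admissible $\mcP$ and any $n$, we conclude $\mcL$ is $n$-representable for every $n$. There is essentially no obstacle here beyond unpacking the definition; the content of the observation is just that row-labelled height-$1$ tilings decompose across rows, after which one-dimensional packing of integer-length intervals of total length $m$ into $[0,m]$ is immediate.
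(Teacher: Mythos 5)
Your argument is correct and is essentially the paper's own reasoning: the Observation is stated without proof precisely because the rows decouple when all heights are $1$, and the same "place the row-$i$ pieces side by side in any order" argument reappears verbatim in the proof of Lemma \ref{lem:row-convex} for the more general row-convex case. Your remark that the area condition in Equations \ref{eq:rep} is implied by the row-sum conditions is a correct and harmless addition.
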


\begin{example}\label{ex:not-rep}
We now give an example of a set of polyominoes that are not 5-representable. Let $\mcL=\{p_{1\times 3},p_{1\times 2},p_{1\times 1},p_{3\times 1}\}$. Now consider the following set
$$\mcP=\{(p_{1\times 3},\{1\}),(p_{1\times 3},\{5\}),(p_{3\times 1},\{1,2,3\}),(p_{3\times 1},\{2,3,4\}),$$
$$(p_{3\times 1},\{3,4,5\}),(p_{1\times 1},\{3\}),(p_{1\times 2},\{2\}),(p_{1\times 2},\{4\})\}.$$
We see that sum of the areas of the polyominoes in $\mcP$ is $3*5+2*2+1=4*5$. We also see that
$$\sum_{\stackrel{(p_{a\times b},\iota)\in\mcP}{i\in\iota}}{b}=4\tn{ for all }1\le i\le 5.$$
If $\mcL$ were 5-representable, we should be able to tile $R_{5\times 4}$ with polyominoes in $\mcP$ satisfying the row assignments. We show that this is not the case. First note that there is a $1\times 3$ polyomino in both the top and bottom row. Also, there is a $3\times 1$ polyomino that intersects the first row, and another $3\times 1$ polyomino that intersects the last row. So if we could tile $R_{5\times 4}$, there are only two ways that these four tiles could be placed. One way is shown below. The other way is the vertical reflection of the following picture.

\begin{center}
 \begin{tikzpicture}
  \draw[thick] (0,0) rectangle (.75,.25);
  \draw (.25,0) -- (.25,.25);
  \draw (.5,0) -- (.5,.25);
  \draw[thick] (.8,.25) rectangle (1.05,-.5);
  \draw (.8,0) -- (1.05,0);
  \draw (.8,-.25) -- (1.05,-.25);
  \draw[thick] (0,-.25) rectangle (.25,-1);
  \draw (0,-.5) -- (.25,-.5);
  \draw (0,-.75) -- (.25,-.75);
  \draw[thick] (.3, -1) rectangle (1.05,-.75);
  \draw (.55,-1) -- (.55,-.75);
  \draw (.8,-1) -- (.8,-.75);
  \draw[dashed] (0,-.25) -- (1.05,-.25);
  \draw[dashed] (0,-.5) -- (1.05,-.5);
  \draw[dashed] (0,0) -- (0,-.25);
  \draw[dashed] (.25,0) -- (.25,-.25);
  \draw[dashed] (.5,0) -- (.5,-.75);
  \draw[dashed] (.75,0) -- (.75,-.75);
  \draw[dashed] (1.05,-.5) -- (1.05,-.75);
 \end{tikzpicture}
\end{center}
We now see that there are two ways to place the last $3\times 1$ polyomino, which must span rows 2,3, and 4. But either way we do this, we make it impossible to place one of the two $1\times2$ polyominoes. So we cannot tile $R_{5\times 4}$ using these polyominoes while respecting their row assignments. So $\mcL$ is not 5-representable.
\end{example}

In Example \ref{ex:not-rep}, we showed that set was not 5-representable by using polyominoes to construct a region inside of $R_{5\times 4}$ that could not be tiled with polyominoes of unit height. If we want to prove that sets of polyominoes are $n$-representable, we need to understand which regions of $\R^2$ can always be tiled with polyominoes of unit height.

\begin{defn}
 A subregion $R\subseteq R_{n\times m}$ is called \emph{row-convex} if for any two points $x,y\in R$ that can be connected by a horizontal line, the convex hull of those two points lies entirely within $R$. $R$ inherits rows and columns from $R_{n\times m}$. Let $|R_i|$ denote the width of $R_i$, the $i$th row of $R$.
\end{defn}

\begin{lemma}\label{lem:row-convex}
Let $R$ be a row-convex region and consider any multiset of the form $\mcP=\{(p_{a\times b},\iota)|\;p_{1\times b}\in\mcL,\;\iota\in I(n)\}$. Then if
$$\sum_{\stackrel{(p_{1\times b},\iota)\in\mcP}{i\in\iota}}{b}=|R_i|\tn{ for all }1\le i\le n,$$ there is a tiling of $R$ by $\mcP$ respecting row assignments.
\end{lemma}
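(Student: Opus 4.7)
First I would clarify the setup: despite the notation $p_{a\times b}$ used in indexing $\mcP$, the hypothesis $p_{1\times b}\in\mcL$ together with the summation formula indicates that every polyomino appearing in $\mcP$ has unit height. Consequently each interval $\iota$ attached to a polyomino in $\mcP$ is a singleton $\{i\}$ for some row index $i$, and the row-assignment data partitions $\mcP$ into $n$ sub-multisets $\mcP_1,\ldots,\mcP_n$, where $\mcP_i$ consists of the $1\times b$ polyominoes assigned to row $i$.

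The central observation is that row-convexity forces each row $R_i$ of $R$ to be a single horizontal interval. If $R_i$ were a disjoint union of two or more intervals, one could choose points $x,y\in R_i$ in different components; these points lie at the same height and belong to $R$, so by the definition of row-convex the segment between them, being horizontal, must lie entirely in $R$, contradicting the disconnectedness of $R_i$. Therefore each $R_i$ is an interval of width exactly $|R_i|$, and $R=\bigcup_{i=1}^n R_i$.

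With this reduction in place, tiling $R$ while respecting row assignments amounts to independently tiling each interval $R_i$ using the unit-height polyominoes in $\mcP_i$. The hypothesis
\[
\sum_{(p_{1\times b},\{i\})\in\mcP}{b}=|R_i|
\]
states precisely that the widths of the tiles assigned to row $i$ sum to the length of $R_i$, so placing them consecutively from the left endpoint of $R_i$ produces an exact covering. The union of these per-row tilings is the desired tiling of $R$.

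There is no substantive obstacle: once row-convexity is unpacked to mean that each row is a single interval, the problem collapses to the one-dimensional triviality that an interval of length $L$ can be tiled by any collection of sub-intervals whose lengths sum to $L$. The conceptual content of the lemma is thus that row-convexity is exactly the geometric property which decouples the two-dimensional tiling problem into $n$ independent one-dimensional ones.
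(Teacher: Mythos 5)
Your proof is correct and follows the same route as the paper: since all tiles in $\mcP$ have unit height, row-convexity reduces the problem to tiling each row independently, and the width condition makes each one-dimensional tiling trivial. You spell out (more explicitly than the paper does) why row-convexity makes each $R_i$ a single interval, but the argument is otherwise identical.
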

\begin{proof}
Since $$\sum_{\stackrel{(p_{1\times b},\iota)\in\mcP}{i\in\iota}}{b}=|R_i|\tn{ for all }1\le i\le n,$$ we can simply place those polyominoes assigned to row $i$ in any order we want since $R$ is row-convex. The tiling of each row does not affect any other row, so $R$ is tilable by $\mcP$ respecting row assignments.
\end{proof}

If $\mcL$ is a set of rectangular polyominoes, $\mcL$ is $n$-representable if when tiling $R_{n\times m}$ and Equations \ref{eq:rep} hold, those polyominoes which are not unit height can be always be arranged such that the remaining untiled region is row-convex. Then appealing to Lemma \ref{lem:row-convex}, $R_{n\times m}$ can be tiled.

\begin{theorem}\label{thm:n-rep}
For $n\in\N$, if $\mcL$ consists of rectangular polyominoes whose height is either 1 or at least $n-1$, then $\mcL$ is $n$-representable. Furthermore, if $n=4$, then any set of rectangular polyominoes where all of the polynominoes of height two have the same width is $n$-representable.
\end{theorem}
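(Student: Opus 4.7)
The plan is to carry out the strategy indicated just before the theorem statement: place the non-unit-height polyominoes inside $R_{n\times m}$ so that the remaining untiled region is row-convex, and then invoke Lemma \ref{lem:row-convex} to finish with the unit-height pieces.

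For the first part, the case $n\le 2$ is immediate, so assume $n\geq 3$; then every polyomino has height $1$, $n-1$, or $n$. The height-$(n-1)$ pieces split into those spanning rows $\{1,\dots,n-1\}$ (top-aligned) and those spanning rows $\{2,\dots,n\}$ (bottom-aligned); let $W_N$, $W_T$, $W_B$ denote the total widths of the height-$n$, top-aligned, and bottom-aligned collections, and set $L := m - W_N - W_T - W_B$. I would pack the blocks left-to-right in the order
\[
(\text{height-}n) \mid (\text{bottom-aligned}) \mid (\text{gap of width }L) \mid (\text{top-aligned}).
\]
A direct bookkeeping shows that row $1$'s untiled columns form the single interval $(\text{bottom-aligned})\cup(\text{gap})$, row $n$'s untiled columns form $(\text{gap})\cup(\text{top-aligned})$, and each middle row's untiled columns are precisely the gap. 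So every row's leftover is one interval, the leftover region is row-convex with row widths matching those forced by (\ref{eq:rep}), and Lemma \ref{lem:row-convex} finishes.

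For the second part ($n=4$), I classify the height-$2$ pieces by the rows they occupy: $T_2$ for $\{1,2\}$, $M_2$ for $\{2,3\}$, $B_2$ for $\{3,4\}$, with counts $k_t$, $k_m$, $k_b$, all of common width $w_2$. The crucial observation coming from the same-width hypothesis is that one $T_2$ and one $B_2$ placed in the same $w_2$ columns together cover all four rows, forming a $4\times w_2$ pseudo-height-$4$ block. Assume $k_t\geq k_b$ (the other case is handled by the mirror construction). I would pair every $B_2$ with a $T_2$, treat each pair together with the actual height-$4$ pieces as a single extended height-$4$ block, and then pack everything left-to-right as
\[
(\text{extended height-}4) \mid (B_3) \mid (M_2) \mid (\text{gap}) \mid (\text{unpaired }T_2) \mid (T_3),
\]
where $T_3$ and $B_3$ denote the top- and bottom-aligned height-$3$ blocks. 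A row-by-row check then gives: row $1$'s leftover $=(B_3)\cup(M_2)\cup(\text{gap})$; row $2$'s leftover $=(\text{gap})$; row $3$'s leftover $=(\text{gap})\cup(\text{unpaired }T_2)$; row $4$'s leftover $=(M_2)\cup(\text{gap})\cup(\text{unpaired }T_2)\cup(T_3)$. Each is a single interval, so row-convexity holds and Lemma \ref{lem:row-convex} completes the tiling.

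The main obstacle, and the reason the same-width hypothesis on height-$2$ pieces is essential, is the behaviour of the $M_2$ pieces: since $M_2$ overlaps both $T_2$ (at row $2$) and $B_2$ (at row $3$), it cannot be combined with either into a full-height column. The chosen ordering is designed precisely so that the $M_2$ block, sitting immediately next to the gap, joins both the row-$1$ and row-$4$ leftovers into single intervals without disturbing the row-$2$ or row-$3$ intervals. Without the common-width hypothesis one cannot in general form $T_2$--$B_2$ pairs, and then the unpaired height-$2$ pieces on both sides of the gap force the row-$1$ or row-$4$ leftover to split into disconnected pieces, defeating the row-convex approach.
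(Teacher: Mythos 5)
Your construction is correct and is essentially the paper's own proof: the paper likewise places the height-$n$ and the two alignments of height-$(n-1)$ blocks on either side of a gap, and for $n=4$ it stacks matched pairs of same-width height-$2$ pieces into full-height columns and orders the remaining blocks around the gap exactly as in your two cases (your arrangement is the paper's second picture up to a vertical flip, and your mirror case is its first), then invokes Lemma \ref{lem:row-convex}. Your explicit row-by-row interval bookkeeping is a slightly more detailed verification of what the paper conveys by pictures.
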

\begin{proof}
 Let $\mcP$ be a multiset satisfying Equations \ref{eq:rep} for $n,m\in\N$. To prove the theorem, we simply need to show that those polyominoes of height $\ge n-1$ can be translated horizontally in their assigned rows in $R_{n\times m}$ such that the remaining untiled region is row-convex. Then we appeal to Lemma \ref{lem:row-convex}.
 
 Let us first handle the case that all polyominoes have at at least $n-1$ or $1$. We then arrange those polyominoes in $\mcP$ as in the following picture.
 
 \begin{center}
  \begin{tikzpicture}
  \draw[thick,fill=gray] (0,0) rectangle (2.75,.75);
   \draw[fill=white] (0,0) rectangle (.5,.75);
   \draw (.25,.4) node {$n$};
   \draw[fill=white] (.5,0) rectangle (1.5,.5);
   \draw (1,.25) node{$n-1$};
   \draw[fill=white] (1.75,.25) rectangle (2.75,.75);
   \draw (2.25,.5) node{$n-1$};
  \end{tikzpicture}
 \end{center}
Here the numbers in the boxes indicate that the box is a contiguous block of polyominoes of that height. Every possible row assignment of every polyomino of height $n-1,n$ is shown in this picture. The grey region is the remaining untiled area. As we can see, it is row convex. Therefore, any set of rectangular polyominoes meeting the specified height conditions is $n$-representable.

Now let $n=4$. We once again need to rearrange the polyominoes of height $\ge 2$ in such a way that a row-convex region is left untiled. There are two scenarios, both pictured below.

\begin{center}
 \begin{tikzpicture}
 \draw[thick,fill=gray] (0,0) rectangle (3,1);
  \draw[fill=white] (0,0) rectangle (.5,.5);
  \draw[fill=white] (0,.5) rectangle (.5,1);
  \draw (.25,.25) node{2};
  \draw (.25,.75) node{2};
  \draw[fill=white] (.5,0) rectangle (1,.75);
  \draw (.75,.4) node{3};
  \draw[fill=white] (1,0) rectangle (1.5,.5);
  \draw (1.25,.25) node{2};
  \draw[fill=white] (2,.25) rectangle (2.5,.75);
  \draw (2.25,.5) node{2};
  \draw[fill=white] (2.5,.25) rectangle (3,1);
  \draw (2.75,.65) node {3};
  
 \draw[thick,fill=gray] (3.5,0) rectangle (6.5,1);
  \draw[fill=white] (3.5,0) rectangle (4,.5);
  \draw[fill=white] (3.5,.5) rectangle (4,1);
  \draw (3.75,.25) node{2};
  \draw (3.75,.75) node{2};
  \draw[fill=white] (4,0) rectangle (4.5,.75);
  \draw (4.25,.4) node{3};
  \draw[fill=white] (4.5,.25) rectangle (5,.75);
  \draw (4.75,.5) node{2};
  \draw[fill=white] (5.5,.5) rectangle (6,1);
  \draw (5.75,.75) node{2};
  \draw[fill=white] (6,.25) rectangle (6.5,1);
  \draw (6.25,.65) node{3}; 
 \end{tikzpicture}
\end{center}

Once again, the boxes represent contiguous blocks of polyominoes of the designated height and every row assignment is accounted for. The grey region is the part left untiled and in both pictures is row-convex. We note that we need the assumption that all polyominoes of height two have the same width to ensure that the two height two regions on the left hand side of the picture can be assumed to have equal width. 

\end{proof}

\begin{corollary}
 For $n\in [3]$, every set of rectangular polyominoes is strongly $n$-Noetherian. Furthermore, if $\mcL$ consists only of rectangular polyominoes of height 1 or height at least $n-1$, then $\mcL$ is strongly $n$-Noetherian. Lastly if $n=4$ and all polyominoes of height two in $\mcL$ have the same width, it is $4$-Noetherian.
\end{corollary}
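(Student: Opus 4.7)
The plan is to deduce all three statements of the corollary by combining Theorem \ref{thm:main}, which asserts that $n$-representability implies the strongly $n$-Noetherian property, with the two sufficient conditions for $n$-representability established in Theorem \ref{thm:n-rep}. In other words, each clause of the corollary reduces to checking that its hypothesis forces $\mcL$ to fall under one of the cases of Theorem \ref{thm:n-rep}.

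First, I would dispatch the second clause, since it is essentially a restatement. If every polyomino in $\mcL$ has height either $1$ or at least $n-1$, then the first case of Theorem \ref{thm:n-rep} gives immediately that $\mcL$ is $n$-representable, and Theorem \ref{thm:main} then delivers that $\mcL$ is strongly $n$-Noetherian. The third clause is entirely analogous: for $n=4$, the hypothesis that every height-$2$ polyomino has a common width is exactly the extra assumption needed to invoke the second half of Theorem \ref{thm:n-rep}, which gives $4$-representability and therefore the strongly $4$-Noetherian property via Theorem \ref{thm:main}.

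For the first clause, I would observe that every rectangular polyomino appearing in a tiling of $R_{n\times m}$ must have height in $\{1, 2, \ldots, n\}$. When $n \in \{1,2,3\}$, every integer in this range is either equal to $1$ or is at least $n-1$: for $n=1$ there is only height $1$; for $n=2$ the possible heights $1,2$ satisfy $h \geq n-1 = 1$; and for $n=3$ the possible heights are $1$ or at least $n-1 = 2$. Hence any set $\mcL$ of rectangular polyominoes automatically satisfies the hypothesis of the second clause when $n \leq 3$, and we conclude strong $n$-Noetherianity by the argument above.

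Since the corollary is a direct packaging of earlier results, there is no serious obstacle; the only minor point worth being explicit about is the elementary case-check $n \in \{1,2,3\}$ that reduces the first clause to the second. The proof is therefore short and amounts to citing Theorems \ref{thm:main} and \ref{thm:n-rep} in the correct order.
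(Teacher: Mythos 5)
Your proposal is correct and follows exactly the paper's route: the paper's proof is simply ``This follows immediately from Theorem \ref{thm:main} and Theorem \ref{thm:n-rep}.'' Your explicit check that for $n\le 3$ every admissible height is either $1$ or at least $n-1$ is the (elementary) detail the paper leaves implicit, and your observation that Theorem \ref{thm:main} actually yields \emph{strongly} $4$-Noetherian in the third clause is consistent with the corollary, since that is a stronger conclusion than the one stated.
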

\begin{proof}
 This follows immediately from Theorem \ref{thm:main} and Theorem \ref{thm:n-rep}.
\end{proof}

It is interesting to note that as long as $n\le 3$, every set of rectangular polyominoes is $n$-representable. However, this does not continue to hold beyond $n=5$ as was shown in Example \ref{ex:not-rep}. Examples can also be found of sets of  rectangular polyominoes that are not 4-representable. However, we do not know if Example \ref{ex:not-rep} is strongly 5-Noetherian or not. 

\section{Finding upper bounds on subtilings}\label{sec:bounds}

For an $n$-representable set $\mcL$, finding a bound on $\beta_n(\mcL)$ is equivalent to bounding the degree of the generators of the corresponding ring $\mcP_n[\mcL]^{G_n}$, where as before $G_n=(\C^{\times})^{n-1}$. There are results from invariant theory about bounding the degrees of generators. We look at the most general theorem first, before specializing to a particular case. For the sake of brevity, we assume some basic knowledge of algebraic geometry in the subsequent exposition.

We first need to realize $G_n$ as an affine variety. The set $\C^{n-1}$ is clearly an affine variety with coordinate ring $\C[x_1,\dots,x_{n-1}]$. Then the ring $\C[x_1,\dots,x_{n-1},t]/\mcI$, where $\mcI=\langle x_it-1, i\in[n-1]\rangle$, is the coordinate ring of $(\C^{\times})^{n-1}$, as each element must now be invertible. So $G_n$ is an affine subvariety of $\C^n$ defined by quadratic polynomials.

Given a rational representation $\rho:G\to\GL(V)$, we have a map $g\mapsto\{a_{ij}(g)\}$ where $\{a_{ij}(g)\}$ is a matrix in $\GL(V)$ and $a_{ij}(g)$ are rational functions in $g$. Let $A_\rho=\tn{max}(\tn{deg}(a_{ij}))$. 

\begin{theorem}[\cite{Derksen2001polynomial}]\label{thm:badbound}
 Let $G$ be an affine variety of $\C^n$ defined by polynomials $h_1,\dots,h_\ell$. Let $\rho:G\to\GL(V)$ be a rational representation of $G$ with finite kernel. Define $H=\tn{max}(\tn{deg}(h_i))$ and let $m=\dim(G)$. Then $\C[V]^G$ is generated by polynomials of degree at most $$\tn{max}(2,\dfrac{3}{8}\dim(\C[V]^G)H^{n-m}A_\rho^m).$$
\end{theorem}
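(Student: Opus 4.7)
The plan is to follow Derksen's two-step strategy: first reduce the problem of bounding generator degrees of $\C[V]^G$ to bounding degrees of homogeneous invariants that merely cut out the nullcone $\mcN_V=\{v\in V\mid 0\in \overline{G\cdot v}\}$, and then bound those degrees via an effective Nullstellensatz applied to the graph of the action. The two ingredients match the two exponents in the final bound: the factor $H^{n-m}$ comes from the defining equations of $G$, while $A_\rho^m$ comes from propagating degrees through the $\dim(G)=m$ directions along which the action varies.

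For the first step, recall Hilbert's theorem that $\mcN_V=V(\C[V]^G_+)$, so positive-degree homogeneous invariants $f_1,\dots,f_r$ whose common zero set equals $\mcN_V$ always exist. Derksen's key reduction asserts that once such $f_i$ are fixed with $\max_i\deg f_i\le D$, the full invariant ring $\C[V]^G$ is already generated in degrees at most $\max(2,\tfrac{3}{8}\dim(\C[V]^G)\cdot D)$. I would reproduce this by combining a graded Nakayama argument, showing that $\C[V]^G$ is finite as a module over $\C[f_1,\dots,f_r]$, with a Bezout-type intersection estimate that accounts for the combinatorial constant $\tfrac{3}{8}$ and the factor $\dim(\C[V]^G)$.

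For the second step, I would consider the incidence variety $Z\subseteq \C^n\by V$ cut out by the $h_i$ (of degree $\le H$) together with the coordinates of $\rho(g)v$, which are linear in $v$ and of degree $\le A_\rho$ in $g$. Because $G$ has dimension $m$ inside $\C^n$, one expects the projection $Z\to V$ to eliminate $n-m$ constraints coming from the defining polynomials of $G$ and $m$ constraints coming from the action; a careful effective elimination theorem then produces invariants on $V$ defining the image with degrees bounded by a product of the form $H^{n-m}A_\rho^m$. The finite-kernel hypothesis on $\rho$ is essential here: it guarantees that $Z$ really sees the full nullcone through this projection rather than collapsing onto a smaller subvariety, so the invariants produced actually cut out $\mcN_V$ and feed back into the first step with $D\le H^{n-m}A_\rho^m$.

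The hard part will be the elimination bookkeeping in the second step. A naive resultant-based elimination would yield doubly exponential bounds, and Derksen's main contribution is a cleaner argument of Hilbert--Mumford flavor that keeps the degree bound polynomial in the input data by exploiting the geometry of the $G$-action rather than blindly saturating ideals. Once the polynomial degree bound $D\le H^{n-m}A_\rho^m$ is established, plugging it into the reduction gives the stated bound $\max(2,\tfrac{3}{8}\dim(\C[V]^G)H^{n-m}A_\rho^m)$, with the $\max$ with $2$ absorbing the degenerate small cases where the reduction is vacuous.
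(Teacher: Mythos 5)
You should first be aware that the paper contains no proof of this statement: Theorem \ref{thm:badbound} is imported verbatim from \cite{Derksen2001polynomial} and used as a black box, so there is no in-paper argument to compare yours against. Judged on its own terms, your sketch does identify the correct architecture of Derksen's proof: (i) reduce the generator-degree bound for $\C[V]^G$ to a degree bound $D$ for homogeneous invariants whose common zero locus is the null cone $\mcN_V$, and (ii) obtain $D$ from the closure $B$ of $\{(v,\rho(g)v)\}\subseteq V\oplus V$, where a refined Bezout inequality gives $\deg(B)\le\deg(G)\cdot A_\rho^{m}\le H^{n-m}A_\rho^{m}$ (the factor $H^{n-m}$ because $G$ has codimension $n-m$ in $\C^n$ and is cut out in degree $\le H$, the factor $A_\rho^{m}$ from the $m$ directions along which the action varies).

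That said, the proposal is a plan rather than a proof, and it has one concrete internal inconsistency you must resolve before anything else. You assert that step (i) yields generation in degree at most $\tn{max}(2,\frac{3}{8}\dim(\C[V]^G)\cdot D)$, i.e.\ linear in $D$; but the version of exactly this reduction quoted later in the paper (Theorem \ref{thm:nullfinite}) is quadratic, $\tn{max}(2,\frac{3}{8}\dim(\C[V]^G)p^2)$. These cannot both be the same lemma, and the discrepancy propagates into the exponents: with the quadratic reduction, step (ii) would have to deliver $D^2\le H^{n-m}A_\rho^{m}$, not $D\le H^{n-m}A_\rho^{m}$, to recover the stated bound. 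Beyond that, the hard content is all deferred and partly misattributed: the constant $\frac{3}{8}$ and the factor $\dim(\C[V]^G)$ do not come from ``a Bezout-type intersection estimate'' but from extracting a homogeneous system of parameters of controlled degrees from the null-cone-cutting invariants and bounding module generators via the Hilbert series, using Cohen--Macaulayness of $\C[V]^G$ (Hochster--Roberts); Derksen does not perform effective elimination at all, but uses that $B$ is cut out in degree $\le\deg(B)$ and applies the Reynolds operator to those equations (restricted to $V\times\{0\}$) to manufacture invariants vanishing exactly on $\mcN_V$; and your reading of the finite-kernel hypothesis is off --- the projection sees the null cone regardless, and the hypothesis is there to ensure $\dim\rho(G)=m$ so that exactly $m$ factors of $A_\rho$ enter the degree count for $B$.
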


In the above theorem, the Krull dimension of $\C[V]^G$ appears in the formula. In general, we do not know this dimension for $\mcP_n[\mcL]^{G_n}$, but is upper bounded by the Krull dimension of $\mcP_n[\mcL]$. Since $\mcP_n[\mcL]$ is polynomial ring by Proposition \ref{prop:poly}, $\tn{Spec}(\mcP_n[\mcL])$ is isomorphic to affine space and the Krull dimension of $\mcP_n[\mcL]$ is equal to the dimension of this space.

Given a set of rectangular polyominoes $\mcL$, let us define $h_n(\mcL)=\{a\le n|\;p_{a\times b}\in\mcL\}$ and $\mcL(a)=\{p_{a\times b}|\;p_{a\times b}\in \mcL\}$. Note that the polyomino $p_{a\times b}$ can be placed in $n-a+1$ different contiguous rows in $R_{n\times m}$, provided $a\le n$. Since $\mcP_n[\mcL]=\ott_{\iota\in I(n)}{\C[\mcL]_\iota}$, $\tn{Spec}(\mcP_n[\mcL])$ is isomorphic to $\bop_{\iota\in I(n)}{\tn{Spec}(\C[\mcL]_\iota)}$. Since $\tn{Spec}(\C[\mcL]_\iota)$ is also isomorphic to affine space, by Proposition \ref{prop:poly}, its dimension is the number of polyominoes in $\mcL$ of height $|\iota|$. So $\dim(\mcP_n[\mcL])=\sum_{i\in h_n(\mcL)}{(n-i+1)|\mcL(i)|}$.

\begin{theorem}\label{thm:toricbound}
 Given an $n$-representable set $\mcL$, let $A$ be the maximum area of a polyomino in $\mcL$ with height not exceeding $n$, then $$\beta_n(\mcL)\le\tn{max}\bigg(2,\dfrac{3}{4}A^{n-1}\cdot\sum_{i\in h_n(\mcL)}{(n-i+1)|\mcL(i)|}\bigg)$$
\end{theorem}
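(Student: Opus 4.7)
The plan is to apply Derksen's bound (Theorem~\ref{thm:badbound}) to the $G_n$-action on $\mcP_n[\mcL]$ and convert the resulting degree bound for the algebra generators of $\mcP_n[\mcL]^{G_n}$ into the bound on $\beta_n(\mcL)$, using Theorem~\ref{thm:main} and its proof. Under the $n$-representability hypothesis, the degree-$m$ graded piece of $\mcP_n[\mcL]^{G_n}$ is exactly the multihomogeneous component of weight $(m,\ldots,m)$ in $\mcP_n[\mcL]$, and these components correspond bijectively to horizontal-translation equivalence classes of tilings of $R_{n\times m}$; a monomial in that ring factors nontrivially exactly when the corresponding tiling admits a subtiling. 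Hence the maximum algebra-generating degree of $\mcP_n[\mcL]^{G_n}$ is an upper bound for $\beta_n(\mcL)$.

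By Proposition~\ref{prop:poly}, $\mcP_n[\mcL]$ is a polynomial ring in $N:=\sum_{i\in h_n(\mcL)}(n-i+1)|\mcL(i)|$ independent variables, one per pair (polyomino $p_{a\times b}\in\mcL$ of height $a\le n$, row interval $\iota$ of length $a$). Each such generator $\Tr(M_\iota^b)$ spans a one-dimensional $G_n$-weight space whose character is $\prod_{i\in\iota}\lambda_i^b$, with the convention $\lambda_n=(\lambda_1\cdots\lambda_{n-1})^{-1}$. I would realize $G_n$ as the affine hypersurface $\{\lambda_1\cdots\lambda_{n-1}t=1\}\subset\C^n$ in coordinates $(\lambda_1,\ldots,\lambda_{n-1},t)$, so that substituting $t$ for $\lambda_n^{-1}$ turns each such character into an honest polynomial of degree exactly $ab\le A$. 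Consequently $A_\rho\le A$ and $\dim G_n=n-1$.

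Combining this with the already-noted bound $\dim\mcP_n[\mcL]^{G_n}\le\dim\mcP_n[\mcL]=N$, substituting into Theorem~\ref{thm:badbound} produces an estimate of the form $\beta_n(\mcL)\le\max(2,\,C\cdot A^{n-1}\cdot N)$; tracking the numerical value of $H^{n-m}$ for this embedding is then expected to yield the constant $C=\tfrac{3}{4}$.

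The main obstacle I anticipate is twofold. First, Theorem~\ref{thm:badbound} requires the representation to have finite kernel, which I would verify by showing that the generating weights $\{b\cdot\mathbf{1}_\iota\}$ rationally span the character lattice of $G_n$; if not, one can harmlessly pass to the effective quotient torus, which only shrinks the Krull dimension entering Derksen's bound. Second and more delicately, obtaining the clean constant $\tfrac{3}{4}$ hinges on the right choice of affine embedding of $G_n$: a crude embedding of $G_n$ into $\C^{2(n-1)}$ via $\lambda_it_i=1$ would give $H^{n-m}=2^{n-1}$ and force $A_\rho\le b\cdot\max(a,n-a)$ rather than $A_\rho\le A$, so the real work is in selecting an embedding that keeps both the degree of the defining equations and the polynomial degree of each weight simultaneously small.
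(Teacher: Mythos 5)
Your overall strategy is the paper's: feed the $G_n$-action on $\mcP_n[\mcL]$ into Theorem \ref{thm:badbound} with $\dim\C[V]^G\le\dim\mcP_n[\mcL]=\sum_{i\in h_n(\mcL)}(n-i+1)|\mcL(i)|$, $m=\dim G_n=n-1$ and $A_\rho\le A$, then convert the generator-degree bound into a bound on $\beta_n(\mcL)$ via $n$-representability and the factorization argument from Theorem \ref{thm:main}. Your computation that each weight becomes an honest polynomial of degree $ab\le A$ after substituting $t$ for $(\lambda_1\cdots\lambda_{n-1})^{-1}$ is right, and your attention to the finite-kernel hypothesis (which the paper silently ignores, and which genuinely fails, e.g.\ when every polyomino has height $n$) is a point in your favor.

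The gap is the constant, and it sits exactly where you said the ``real work'' is. With your embedding $\{\lambda_1\cdots\lambda_{n-1}t=1\}\subset\C^{n}$ the single defining equation has degree $H=n$, the exponent in Theorem \ref{thm:badbound} is (ambient dimension)$\,-\,m=n-(n-1)=1$, and the theorem returns $\max\big(2,\tfrac{3n}{8}A^{n-1}\sum_{i\in h_n(\mcL)}(n-i+1)|\mcL(i)|\big)$, which matches the claimed $\tfrac{3}{4}$ only when $n=2$. You never produce the embedding that would make $H^{n-m}=2$ while keeping $A_\rho\le A$, so the stated bound is not established by your argument. For the record, the paper gets $H=2$ by presenting $G_n$ inside $\C^{n}$ with coordinate ring $\C[x_1,\dots,x_{n-1},t]/\langle x_it-1:\ i\in[n-1]\rangle$; your implicit skepticism is well founded, since that ideal forces $x_1=\cdots=x_{n-1}$ and so does not cut out the full $(n-1)$-dimensional torus, and a genuine codimension-one presentation of $(\C^\times)^{n-1}$ in $\C^n$ has defining degree $n$, as in your choice. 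So as written your proof yields the theorem with $\tfrac{3}{4}$ replaced by $\tfrac{3n}{8}$, and closing that factor of $n/2$ is precisely the step you left open.
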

\begin{proof}
 This will follow from Theorem \ref{thm:main} and Theorem \ref{thm:badbound} after we substitute in the appropriate quantities. We note that $H=2$ as $G_n$ is defined by quadratics. Furthermore, $m=n-1$ and $t=n$. By the way $G_n$ acts on $\mcP_n[\mcL]$, we see that the maximum degree of a monomial is equal to the maximum area of polyomino in $\mcL$ with height not exceeding $n$. Thus $A_\rho=A$. Lastly $$\mcP_n[\mcL]^{G_n}\le\sum_{i\in h_n(\mcL)}{(n-i+1)|\mcL(i)|}.$$
\end{proof}

\subsection{The case where all polyominoes have unit height}

Let us look at the special case where $\mcL$ consists only of polyominoes of the form $1\times k$. As already observed, such a set is $n$-representable for every $n$. For this special case, we can actually realize $\mcP_n[\mcL]^{G_n}$ in a different way. Let $n$ be fixed and $\omega$ the maximum width of a polyomino in $\mcL$. Now we look at the subvariety $\mathcal{V}\subset\End((\C^{\omega})^{\ot n})$ of matrices of the form $\ott_{i=1}^n{M_i}$ where each $M_i\in\End(\C^\omega)$. 

Now let $\GL_{n,\omega}:=\times_{i=1}^n{\GL(\C^\omega)}$ act on $\mathcal{V}$ by $$(g_1,\dots,g_n).\ott_{i=1}^n{M_i}:=\ott_{i=1}^n{g_iM_ig_i^{-1}}.$$
If $\C[\mathcal{V}]$ denotes the coordinate ring of $\mathcal{V}$, then $\C[\mathcal{V}]^{\GL_{n,\omega}}$ is the corresponding invariant ring. We claim that $\mcP_n[\mcL]^{G_n}$ is isomorphic to a quotient ring of $\C[\mathcal{V}]^{\GL_{n,\omega}}$.

\begin{defn}
 For $\sigma=(\sigma_1,\dots,\sigma_n)\in\mcS_m^n$, let $\sigma_i=(r_1\cdots r_k)(s_1\cdots s_l)\cdots$ be a disjoint cycle decomposition.  
For such a $\sigma\in\mcS_m^n$, define $\tn{Tr}_\sigma=T_{\sigma_1}\cdots T_{\sigma_n}$ on $\mathcal{V}$, where 
$$T_{\sigma_i}(\ott_{j=1}^n{M_{j}})=\tn{Tr}(M_i^k)\tn{Tr}(M_i^l)\cdots.$$
\end{defn}

We see that the functions in the above definition are precisely the polynomial invariants of $\mcP_n[\mcL]^{G_n}$ when $\mcL$ is the set of rectangular polyominoes $\{p_{1\times k}|\; k\in[\omega]\}$.

\begin{theorem}[\cite{turner2015complete}]\label{thm:glinvs}
 The invariant ring $\C[\mathcal{V}]^{\GL_{n,\omega}}$ is generated by the polynomials $\Tr_{\sigma}$ for $\sigma\in\mcS_m^n$ such that the disjoint cycle decomposition of $\sigma$ contains only cycles of length at most $\omega$.
\end{theorem}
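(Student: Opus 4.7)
The plan is to first describe $\mathcal{V}$ concretely as a quotient of the affine space $\End(\C^\omega)^{\times n}$. The parametrization $\mu\colon(M_1,\dots,M_n)\mapsto M_1\otimes\cdots\otimes M_n$ is surjective onto $\mathcal{V}$, is $\GL_{n,\omega}$-equivariant, and its generic fibers are orbits of the torus $T=\{(c_1,\dots,c_n)\in(\C^{\times})^n:c_1\cdots c_n=1\}\cong(\C^{\times})^{n-1}$ acting by $(c_i)\cdot(M_i)=(c_iM_i)$. Pulling back along $\mu^*$ realizes $\C[\mathcal{V}]$ as the subring of $\C[\End(\C^\omega)^{\times n}]$ generated by monomials of multidegree $(1,1,\dots,1)$; a monomial-by-monomial check shows this subring coincides with the ring of multihomogeneous polynomials of balanced degree $(d,d,\dots,d)$ for some $d\ge 0$, which is exactly the ring of $T$-invariants.

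Since $T$ commutes with $\GL_{n,\omega}$, I would then compute $\C[\mathcal{V}]^{\GL_{n,\omega}}=\big(\C[\End(\C^\omega)^{\times n}]^{\GL_{n,\omega}}\big)^{T}$ by handling the $\GL_{n,\omega}$-part first. The action splits as independent simultaneous conjugation on each factor, so
$$\C[\End(\C^\omega)^{\times n}]^{\GL_{n,\omega}}\;=\;\bigotimes_{i=1}^{n}\,\C[\End(\C^\omega)]^{\GL(\C^\omega)}.$$
By the classical invariant theory of a single matrix under conjugation (Procesi, Kraft), each tensor factor is the polynomial ring on the power traces $\Tr(M_i),\Tr(M_i^2),\dots,\Tr(M_i^{\omega})$; higher power traces are redundant by Cayley--Hamilton (equivalently by Newton's identities applied to the characteristic polynomial).

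Finally I would impose $T$-invariance on the resulting monomial generators. A product $\prod_{i=1}^n\prod_j\Tr(M_i^{k_{i,j}})$ has multidegree $(e_1,\dots,e_n)$ with $e_i=\sum_j k_{i,j}$, and is $T$-invariant iff every $e_i$ equals a common integer $m$. For each $i$, the multiset $\{k_{i,j}\}_j\subseteq\{1,\dots,\omega\}$ summing to $m$ is exactly the cycle type of some permutation $\sigma_i\in\mcS_m$ whose cycles all have length at most $\omega$; the corresponding monomial is $\Tr_\sigma$ for $\sigma=(\sigma_1,\dots,\sigma_n)\in\mcS_m^n$. Letting $m$ vary over $\N$ exhausts every balanced product of traces, producing the stated generating set.

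The main obstacle, in my view, is making the first step precise: verifying that $\mu^*$ surjects onto the $T$-invariants, rather than merely embedding into them. Once that identification is in hand, the rest is an application of First-Fundamental-Theorem invariant theory together with a bookkeeping step translating "partitions of a common $m$ with parts at most $\omega$" into conjugacy classes inside $\mcS_m$. A secondary subtlety is that $\Tr_\sigma$ is well-defined on $\mathcal{V}$ only when every $\sigma_i$ lies in the same $\mcS_m$; this is automatic from the $T$-invariance condition and matches the theorem's indexing.
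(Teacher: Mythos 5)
This theorem is imported by citation from \cite{turner2015complete}; the paper contains no proof of it, so there is nothing internal to compare your argument against. On its own terms your proof is correct. The chain of reductions is sound: $\mathcal{V}$ is the affine cone over a Segre product, so $\mu^*$ identifies $\C[\mathcal{V}]$ with the subring of $\C[\End(\C^\omega)^{\times n}]$ generated by the multidegree-$(1,\dots,1)$ monomials, and since every balanced monomial of multidegree $(d,\dots,d)$ factors as a product of $d$ such monomials, this subring is exactly the $T$-invariant (balanced) subring --- this settles the ``main obstacle'' you flag. The two group actions commute, so taking $\GL_{n,\omega}$-invariants first reduces, via the classical description of conjugation invariants of a single matrix, to the polynomial ring on $\Tr(M_i^k)$, $i\in[n]$, $k\le\omega$; and the $T$-invariants of a polynomial ring under a diagonal torus action are \emph{spanned} by the invariant monomials, which are precisely the $\Tr_\sigma$ with each $\sigma_i\in\mcS_m$ having cycle type a partition of a common $m$ into parts at most $\omega$. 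This actually gives slightly more than the statement (a vector-space spanning set of each graded piece by products of the $\Tr_\sigma$, not merely algebra generation). The only cosmetic point worth tightening is that the fiber/orbit description of $\mu$ is motivational and not needed once you have the monomial identification, and that ``First Fundamental Theorem'' here is just the one-matrix case (symmetric functions of eigenvalues via Newton's identities), since each $\GL(\C^\omega)$ factor acts on its own matrix only.
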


The above theorem tells us that if $\mcL=\{p_{1\times k}|\; k\in[\omega]\}$, then $\mcP_n[\mcL]^{G_n}=\C[\mathcal{V}]^{\GL_{n,\omega}}$. In general, let us define $w(\mcL)=\{k|\; 1\times k\in\mcL\}$ and take the subvariety of $\mathcal{V}$ such that for any $M\in\mathcal{V}$ and any decomposition M=$\ott_{i=1}^n{M_i}$, $\Tr(M_i^\ell)=0$ for $\ell\in[\omega]$, $\ell\notin w(\mcL)$, for all $1\le i\le n$. This defines another $\GL_{n,\omega}$-stable subvariety of $\mathcal{V}$, denote it $\mathcal{V}_{\mcL}$. This means that $\C[\mathcal{V}_{\mcL}]^{\GL_{n,\omega}}$ is a quotient ring of $\C[\mathcal{V}]^{\GL_{n,\omega}}$ \cite{nagata1963invariants}. As such, a degree bound on $\C[\mathcal{V}]^{\GL_{n,\omega}}$ gives a degree bound on $\C[\mathcal{V}_{\mcL}]^{\GL_{n,\omega}}\cong\mcP_n[\mcL]^{G_n}$.

We can use some classical tools from invariant theory to find  bounds for this ring. We introduce some definitions.

\begin{defn}
 The \emph{null cone} of an action $G\actson V$ is the set vectors $v$ such that $0\in\overline{G.v}$ (the Zariski closure of the orbit of $v$). We denote it by $\mcN_V$. Equivalently, $\mcN_V$ are those $v\in V$ such that $f(v)=f(0)$ for all invariant polynomials $f$. 
\end{defn}

If one wants to compute the generators of an invariant ring, this can be accomplished by determining the null cone. If one knows precisely the generators of the ideal of the null cone, then these are precisely the generators of the invariant ring \cite{hilb:93,derksen1997constructive}. However, it usually easier to find sets of invariant polynomials whose vanishing locus is $\mcN_V$. For that, one can use the following result (similar to Theorem \ref{thm:badbound}).

\begin{theorem}[\cite{derksen2013computational}]\label{thm:nullfinite}
Let $G\actson V$, be a reductive group acting rationally on a vector space. Let $f_1,\dots,f_\ell$ be homogeneous invariants, with maximum degree $p$, such that their vanishing locus is $\mcN_V$. Then $\C[V]^G$ is generated by polynomials of degree at most $$\operatorname{max}(2,\frac{3}{8}\dim(\C[V]^G)p^2).$$ 
\end{theorem}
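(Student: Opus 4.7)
The approach is a classical invariant-theoretic reduction: construct from the $f_i$ a homogeneous system of parameters (HSOP) for $\C[V]^G$ of bounded degree, then exploit Cohen--Macaulayness to turn this into a degree bound on the full generating set.

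First I would produce an HSOP $\theta_1,\ldots,\theta_d$ for $\C[V]^G$ with $d=\dim(\C[V]^G)$ and each $\deg\theta_i\le p$ (after homogenizing by raising to appropriate powers if necessary). The key geometric point is that $\mcN_V$ is precisely the fiber over the closed point of the categorical quotient $\pi\colon V\to V/\!/G=\tn{Spec}(\C[V]^G)$, so invariants whose common zero locus in $V$ is $\mcN_V$ correspond to elements of $\C[V]^G$ with zero-dimensional common vanishing on $V/\!/G$. A Bertini-style dimension argument then shows that $d$ generic $\C$-linear combinations of suitable powers and products of the $f_i$'s (chosen to share a common degree bounded by $p$) still cut out $\mcN_V$, and therefore form an HSOP.

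Next, by the Hochster--Roberts theorem, since $G$ is reductive and $V$ is a $G$-representation, $\C[V]^G$ is Cohen--Macaulay. Consequently $\C[V]^G$ is free as a graded module over $A:=\C[\theta_1,\ldots,\theta_d]$. Let $\eta_1,\ldots,\eta_s$ be a homogeneous free basis; then the union $\{\theta_i\}\cup\{\eta_j\}$ generates $\C[V]^G$ as a $\C$-algebra. Bounding the maximum degree of an algebra generator therefore reduces to bounding $\max_j\deg\eta_j$, since the $\theta_i$ already have degree at most $p$.

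The main obstacle is this final bound, and it is where the constant $\tfrac{3}{8}$ enters. From the Hilbert series identity
$$H(\C[V]^G,t)=\frac{\sum_{j=1}^s t^{\deg\eta_j}}{(1-t^p)^d},$$
both the rank $s$ and the individual degrees $\deg\eta_j$ can be controlled by combining (i)~a bound on $s$ obtained by evaluating the Hilbert series at $t=1$ and comparing pole orders against $H(\C[V],t)=(1-t)^{-\dim V}$, and (ii)~a careful accounting of where the top-degree term of the numerator can sit relative to the $d$ copies of $(1-t^p)$ in the denominator. A fiddly but essentially mechanical estimate of this type yields $\max_j\deg\eta_j\le\tfrac{3}{8}\dim(\C[V]^G)\,p^2$; the outer $\max(2,\cdot)$ handles degenerate cases such as $p=1$ or $d\le 1$. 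I expect this last step---extracting the specific constant $\tfrac{3}{8}$ rather than some weaker $O(dp^2)$ bound---to be the technically most delicate part of the argument.
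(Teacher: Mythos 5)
First, a remark on scope: the paper does not prove this statement at all --- it is quoted verbatim from Derksen--Kemper as a black box --- so the comparison here is against the standard proof in the cited literature (Derksen's ``Polynomial bounds for rings of invariants''), not against anything in this paper. Your overall architecture does match that proof: build a homogeneous system of parameters out of invariants cutting out $\mcN_V$, invoke Hochster--Roberts to get Cohen--Macaulayness and hence freeness over the HSOP subalgebra, and bound the degrees of the module generators. That skeleton is correct.

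The genuine gap is in your first step: you cannot, in general, extract an HSOP $\theta_1,\dots,\theta_d$ with $\deg\theta_i\le p$. Generic linear combinations only make sense among invariants of \emph{equal} degree, and equalizing the degrees of the $f_i$ by taking powers and products forces you into the numerical semigroup generated by $\deg f_1,\dots,\deg f_\ell$; the smallest common degree admitting enough monomials is governed by a Frobenius-type bound and can be as large as roughly $p^2/4$ (e.g.\ with degrees $2$ and $3$ and $p=3$, the first usable common degree is $6>p$). This is not a technicality one can wave away: if an HSOP in degrees $\le p$ always existed, then your own step 3 (Cohen--Macaulayness plus the fact that the $a$-invariant of $\C[V]^G$ is negative) would give $\max_j\deg\eta_j<\sum_i\deg\theta_i\le dp$, i.e.\ a \emph{linear} bound in $p$, which is strictly stronger than the theorem and is false. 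Correspondingly, your diagnosis of where $\tfrac38\dim(\C[V]^G)p^2$ comes from is inverted: the module-generator step is the easy part (the top degree of the numerator of the Hilbert series is $\sum_i\deg\theta_i$ plus the negative $a$-invariant, so the $\eta_j$ are bounded by $\sum_i\deg\theta_i$), and the entire quadratic factor $\tfrac38p^2$ lives in the construction of the HSOP, i.e.\ in bounding $\sum_i\deg\theta_i$ by $\tfrac38 d\,p^2$ via the semigroup/Frobenius analysis. So the proposal identifies the right three ingredients but places the essential difficulty, and the constant, in the wrong one, and as written step 1 is simply not achievable.
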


In the proof of the above Theorem, the same arguments can be used to prove the following slightly stronger statement that we shall need.
\begin{corollary}\label{cor:nullfinite}
 Let $G\actson V$ be a reductive group acting rationally on a vector space and $X\subseteq V$ be a $G$-stable subvariety. If $\C[X]^G$ is Cohen-Macaulay, then let $f_1,\dots,f_\ell$ be homogeneous invariants of maximum degree $p$ whose vanishining locus is the null cone of the action of $G$ on $X$. Then $\C[X]^G$ is generated by polynomials of degree at most $$\operatorname{max}(2,\frac{3}{8}\dim(\C[V]^G)p^2).$$ 
\end{corollary}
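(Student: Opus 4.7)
The plan is to follow the proof of Theorem \ref{thm:nullfinite} step by step and verify that the only place where the hypothesis ``$V$ is a vector space'' enters is through the Cohen-Macaulay property of the invariant ring. In Derksen's original argument, this property is supplied for free by Hochster-Roberts, because $\C[V]$ is itself a polynomial ring; for a general $G$-stable subvariety $X \subseteq V$, the ring $\C[X]$ need not be Cohen-Macaulay, so $\C[X]^G$ need not be either. Imposing Cohen-Macaulayness as a hypothesis closes precisely this gap and lets the rest of the proof go through unchanged.

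Concretely, the first step is to show that if $f_1, \ldots, f_\ell \in \C[X]^G$ are homogeneous of maximum degree $p$ and vanish simultaneously only on $\mcN_X$, then the inclusion $A := \C[f_1, \ldots, f_\ell] \hookrightarrow \C[X]^G$ is a finite module extension. This follows from the standard Hilbert-Mumford / Kempf-Ness style argument: the morphism of affine quotients $X/\!/G \to \tn{Spec}(A)$ is proper in each graded component, and its fiber over the origin is (set-theoretically) the image of $\mcN_X$, a single point. Properness with finite fibers forces the morphism to be finite, which is the desired module-finiteness. This argument uses only that $X$ is a $G$-stable cone in $V$ and that the $f_i$ cut out $\mcN_X$ inside $X$; it does not require $X$ to be all of $V$.

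The second step is to extract from the $f_i$ a homogeneous system of parameters $h_1, \ldots, h_r$ for $\C[X]^G$, with $r = \dim \C[X]^G \le \dim \C[V]^G$ and $\deg h_j \le p$. Here one uses Noether normalization in the graded setting applied to the finitely generated algebra $\C[X]^G$ over $A$: a generic linear combination of the $f_i$ and their products gives an hsop of the same degree bound $p$. At this point the Cohen-Macaulay hypothesis enters: $\C[X]^G$ is a free module over $\C[h_1, \ldots, h_r]$, and the classical Hilbert-series estimate bounds the maximum degree of a free-module generator by $\tfrac{3}{8} \dim(\C[X]^G) p^2 \le \tfrac{3}{8} \dim(\C[V]^G) p^2$. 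Combining these generators with the $h_j$ themselves (whose degrees are at most $p$) gives the claimed bound on $\C$-algebra generators of $\C[X]^G$.

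The main obstacle is really just a bookkeeping one: one must check that Derksen's construction of the hsop via generic linear combinations of the $f_i$ is purely a statement about the graded finite extension $A \hookrightarrow \C[X]^G$, and does not secretly exploit that $\C[V]$ is a polynomial ring. Once this is verified—and it is, because Noether normalization and the associated degree estimates are valid for any finitely generated graded $\C$-algebra—the proof of Theorem \ref{thm:nullfinite} in \cite{derksen2013computational} applies to $X$ verbatim, with the Cohen-Macaulay hypothesis playing the role previously played by Hochster-Roberts.
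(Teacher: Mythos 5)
Your overall strategy is the same as the paper's: the paper gives no argument beyond asserting that Derksen's proof of Theorem \ref{thm:nullfinite} carries over once Cohen--Macaulayness is assumed rather than supplied by Hochster--Roberts, and your sketch is an elaboration of exactly that assertion. Your first step (finiteness of $\C[X]^G$ over $\C[f_1,\dots,f_\ell]$ via properness of the quotient map onto the image) is indeed insensitive to whether the ambient object is $V$ or a $G$-stable cone $X$, and the reduction of the dimension count to $\dim(\C[V]^G)$ via Nagata's surjection $\C[V]^G\to\C[X]^G$ is correct.

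There is, however, a genuine gap in your last step, located exactly where you claim that ``the associated degree estimates are valid for any finitely generated graded $\C$-algebra.'' If $R$ is graded Cohen--Macaulay and free over the hsop subring $\C[h_1,\dots,h_r]$ with $\deg h_i=d_i$, then the top degree of a free-module generator equals $\sum_i d_i+\deg H(R,t)$, where $H(R,t)$ is the Hilbert series regarded as a rational function. Derksen's bound is obtained by combining $\sum_i d_i\le \frac{3}{8}rp^2$ (note: the hsop elements produced from the $f_i$ do \emph{not} have degree $\le p$ --- one must equalize degrees by taking powers and products, which is where the factor $p^2$ originates; a ``generic linear combination'' of forms of different degrees is not homogeneous) with the inequality $\deg H(\C[V]^G,t)\le 0$. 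The latter is a theorem of Kempf about invariant rings of linear representations; it is \emph{not} a consequence of Cohen--Macaulayness, it can fail for graded CM algebras in general (e.g.\ $\C[x^2,x^3]$, or a hypersurface of degree $>n$ in $\C^n$), and it does not automatically persist when passing from $\C[V]^G$ to a quotient $\C[X]^G$. So the proof does ``secretly exploit'' the linear ambient space at this point, and the verbatim transfer requires either adding the hypothesis $\deg H(\C[X]^G,t)\le 0$ or verifying it for the ring at hand --- which is available in the paper's application, since $\C[\mathcal{V}_{\mcL}]^{\GL_{n,\omega}}\cong\mcP_n[\mcL]^{G_n}$ is itself the invariant ring of a reductive group acting on the polynomial ring $\mcP_n[\mcL]$ of Proposition \ref{prop:poly}. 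You should make this verification explicit.
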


We now need to prove that $\C[\mathcal{V}_\mcL]^{\GL_{n,\omega}}$ is Cohen-Macaulay. However, we know that this ring is isomorphic to $\mcP_n[\mcL]^{G_n}$ which is Cohen-Macualay by the following theorem.
\begin{theorem}[\cite{hochster1974rings}]
 For a reductive group $G$ acting rationally on a vector space $V$, $\C[V]^G$ is Cohen-Macaulay.
\end{theorem}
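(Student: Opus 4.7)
The plan is to exhibit $\C[V]^G$ as a direct summand of $\C[V]$ as a $\C[V]^G$-module, and then invoke the classical result that a pure subring of a regular ring is Cohen-Macaulay.

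First, I would construct a Reynolds operator $R\colon \C[V] \to \C[V]^G$. Since $G$ is reductive and acts rationally on $V$, the induced action on the polynomial ring $\C[V]$ is also rational, and by complete reducibility of rational representations of reductive groups (which is valid here because we are working over $\C$, hence linearly reductive and reductive coincide), $\C[V]$ decomposes as a $G$-module into isotypic components. The trivial isotypic component is exactly $\C[V]^G$, and the projection $R$ onto it is automatically a $\C[V]^G$-module homomorphism that splits the inclusion $\C[V]^G \hookrightarrow \C[V]$.

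Next, because $\C[V]$ is a polynomial ring it is regular, and in particular Cohen-Macaulay. The splitting just produced already makes $\C[V]^G$ a pure subring of $\C[V]$ (a direct summand as $\C[V]^G$-modules is more than sufficient for purity). The Hochster--Roberts theorem that any pure subring of a regular ring is Cohen-Macaulay therefore applies and gives the conclusion.

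The main obstacle is really the Hochster--Roberts theorem itself. Its proof ultimately goes through reduction to positive characteristic and exploits the Frobenius morphism, handled cleanly in modern language by tight closure; there is no known elementary proof, even in characteristic zero. I would note, however, that for the application made of it in this paper the group $G_n$ is actually a torus, so $\mcP_n[\mcL]^{G_n}$ is a semigroup ring on a normal affine semigroup, and its Cohen-Macaulayness in that setting can be obtained from Hochster's earlier and more combinatorial theorem on such rings without needing the full strength of the reductive case.
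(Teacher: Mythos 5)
The paper offers no proof of this statement at all: it is quoted verbatim as the Hochster--Roberts theorem, with a citation to their 1974 paper, and is used as a black box to conclude that $\C[\mathcal{V}_\mcL]^{\GL_{n,\omega}}\cong\mcP_n[\mcL]^{G_n}$ is Cohen--Macaulay. Your proposal is the standard derivation and is correct as far as it goes: over $\C$ reductive equals linearly reductive, the Reynolds operator exists and satisfies the Reynolds identity, so $\C[V]^G$ is a direct summand of the regular ring $\C[V]$ as a module over itself, and the purity theorem then gives Cohen--Macaulayness. You are also right, and commendably explicit, that this merely relocates the difficulty: the direct-summand step is soft representation theory, while the assertion that a pure subring (or direct summand) of a regular ring is Cohen--Macaulay is the genuinely deep input, proved by reduction to characteristic $p$ and nowadays via tight closure, and cannot be supplied elementarily. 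Since the paper itself cites rather than proves the result, deferring to Hochster--Roberts at exactly that point is the appropriate resolution. Your closing observation is a nice bonus and is accurate for this paper's use of the theorem: the group $G_n=(\C^\times)^{n-1}$ is a torus acting by characters on the generators of the polynomial ring $\mcP_n[\mcL]$, so the invariant ring is spanned by the monomials with trivial total character, i.e.\ it is the semigroup ring of the intersection of a lattice with a rational polyhedral cone; such a semigroup is normal, and Hochster's earlier theorem on normal affine semigroup rings already yields Cohen--Macaulayness without the full reductive machinery.
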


When studying orbit closures, the following theorem is a powerful tool when dealing with reductive groups.

\begin{theorem}[The Hilbert-Mumford Criterion \cite{kempf1978instability}]\label{thm:hbcriterion}
 For a linearly reductive group $G$ acting on a variety $V$, if $\overline{G.w}\setminus G.w\ne\emptyset$, then there exists a $v\in\overline{G.w}\setminus G.w$ and a 1-parameter subgroup (or cocharacter) $\lambda:k^\times\to G$ (where $\lambda$ is a homomorphism of algebraic groups), such that $\lim_{t\to 0}{\lambda(t).w}=v$. 
\end{theorem}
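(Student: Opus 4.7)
The plan is to prove the criterion in two stages: first reduce the general reductive case to the case of a torus, then analyze torus actions via weight decomposition and convex geometry on the cocharacter lattice.

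First I would invoke the structure theory of linearly reductive groups: $G$ contains a maximal torus $T$, and cocharacters of $G$ are determined (up to conjugation) by those of $T$. The reduction step proceeds as follows. Given $w$ with $v\in\overline{G.w}\setminus G.w$, one considers the morphism $\mu_w:G\to V$, $g\mapsto g.w$, and the set $\overline{G.w}$ as an irreducible $G$-stable subvariety. A classical argument (essentially due to Mumford and formalized by Richardson) shows that every boundary orbit of $G.w$ can be reached by a one-parameter degeneration; concretely, there exists $g\in G$ and a cocharacter $\lambda:k^\times\to G$ such that $\lim_{t\to 0}\lambda(t).(g.w)$ lies in the closure but not the orbit. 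After replacing $\lambda$ by a $G$-conjugate and $v$ by a point in the same $G$-orbit as the limit, we may assume $\lambda$ factors through $T$, reducing to a torus problem.

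Second, for the torus case, I would decompose $V=\bigoplus_{\chi\in X^*(T)}V_\chi$ into weight spaces and write $w=\sum_\chi w_\chi$. A cocharacter $\lambda\in X_*(T)$ acts on $V_\chi$ by multiplication by $t^{\langle\lambda,\chi\rangle}$, so the limit $\lim_{t\to 0}\lambda(t).w$ exists exactly when $\langle\lambda,\chi\rangle\ge 0$ for every $\chi$ in the support $S(w):=\{\chi:w_\chi\ne 0\}$, and in that case equals $\sum_{\langle\lambda,\chi\rangle=0}w_\chi$. The existence of a nontrivial such $\lambda$ (giving a proper limit point in $\overline{T.w}\setminus T.w$) is equivalent to the existence of a supporting hyperplane of the convex hull $\mathrm{Conv}(S(w))\subset X^*(T)_\R$ that passes through some vertices and has $S(w)$ on its non-negative side. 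That this is possible whenever the orbit is not already closed follows from the fact that a $T$-orbit $T.w$ is closed if and only if $0$ lies in the relative interior of $\mathrm{Conv}(S(w))$, a standard result which is essentially the toric-geometry incarnation of the Hilbert-Mumford criterion.

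The main obstacle is the reduction from $G$ to $T$. The naive claim that every boundary point of $G.w$ is the limit along some cocharacter is not automatic; Mumford's original proof used Iwahori decompositions and a careful analysis of $G/P$ for parabolic subgroups $P$, while Kempf's approach (the source cited) introduced a length function on $X_*(T)_\R$ and extracted a canonical \emph{optimal} destabilizing cocharacter as the minimizer of this function over the set of cocharacters realizing some boundary point. Either approach ultimately rests on the Bruhat decomposition and the fact that parabolic subgroups contain maximal tori; once this machinery is set up, the torus case finishes the proof by the weight-space argument above.
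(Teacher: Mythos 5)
First, note that the paper does not prove this statement at all: it is quoted as a classical result with a citation to Kempf's \emph{Instability in invariant theory}, so there is no in-paper argument to compare yours against. Judged on its own merits, your proposal handles the torus case correctly: the weight-space computation of $\lim_{t\to 0}\lambda(t).w$ and the characterization of closed torus orbits by the condition $0\in\mathrm{relint}\,\mathrm{Conv}(S(w))$ are both right (modulo the minor imprecision that the relevant hyperplane $\{\langle\lambda,\cdot\rangle=0\}$ passes through the origin and need not touch any vertex of the hull, e.g.\ when $0\notin\mathrm{Conv}(S(w))$). But the torus case is the easy half; the reduction from $G$ to $T$, which is the entire content of the theorem, is not actually carried out.

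Concretely, two things go wrong in the reduction. In your second paragraph, the ``classical argument showing that every boundary orbit of $G.w$ can be reached by a one-parameter degeneration'' is precisely the statement being proved; invoking it and then conjugating $\lambda$ into $T$ is circular. Moreover, the phrase ``reducing to a torus problem'' is misleading: the hypothesis is that the $G$-orbit of $w$ is not closed, which does not imply that the $T$-orbit of $w$ (or of any translate $g.w$ chosen in advance) fails to be closed, so the convex-geometry criterion of your third paragraph cannot simply be applied to $T.w$; and conversely, if one already grants a destabilizing cocharacter through $T$, the torus analysis is no longer needed. The standard proofs do not factor through the torus criterion this way: Mumford's argument uses the valuative criterion of properness to produce $\gamma\in G(K)$, $K$ the fraction field of a DVR $R$, with $\gamma.w$ specializing into the boundary, and then the Iwahori decomposition $G(K)=G(R)\,X_*(T)\,G(R)$ to replace $\gamma$ by a genuine cocharacter; Kempf's approach runs an optimization over destabilizing cocharacters directly. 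Your final paragraph names these tools but does not execute either, so what is actually established is only the torus statement, and the theorem itself remains unproved. Since the result is standard and the paper only cites it, quoting it is fine --- but as a proof, yours has a gap exactly where the difficulty lies.
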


A cocharachter of $\GL_{n,\omega}$ is a product of cocharachters $\lambda(t)=\times_{i=1}^n{\lambda_i(t)}$ of $\GL(\C^\omega)$. If $\lambda(t).(\ott_{i=1}^n{M_i})\to 0$, it is easy to see that for some $i$, $\lambda_i(t).M_i\to 0$. This means that $M_i$ is in the null cone of $\GL(\C^\omega)\actson\End(\C^\omega)$ by conjugation. It is well known that this implies that $M_i$ is nilpotent (cf. \cite{brion2008representations}). So the null cone of $\mathcal{V}$ are those matrices $\ott_{i=1}^n{M_i}$ where at least one $M_i$ is nilpotent. 

\begin{defn}
 Given a tuple $(d_1,\dots,d_n)$, let $d=\tn{lcm}\{d_1,\dots,d_n\}$. Then define $$\Tr_{d_1,\dots,d_n}(\ott_{i=1}^n{M_i}):=\prod_{i=1}^n{\Tr(M^{d_i})^{d/d_i}}.$$
\end{defn}

\begin{theorem}\label{thm:segrebound}
 The ring $\C[\mathcal{V_\mcL}]^{\GL_{n,\omega}}$ is generated by polynomials of degree at most $$\tn{max}(2,\frac{3}{8}(\omega^n)\tn{lcm}\{w|\;w\in w(\mcL)\}^2).$$
\end{theorem}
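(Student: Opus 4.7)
The plan is to apply Corollary \ref{cor:nullfinite} to the $\GL_{n,\omega}$-stable subvariety $\mathcal{V}_\mcL\subseteq\mathcal{V}$. This reduces the theorem to four substeps: (i) pin down the null cone of $\GL_{n,\omega}\actson\mathcal{V}_\mcL$, (ii) exhibit a family of homogeneous invariants whose common vanishing locus is this null cone and bound their degrees, (iii) bound $\dim\C[\mathcal{V}_\mcL]^{\GL_{n,\omega}}$ from above, and (iv) verify Cohen--Macaulayness.

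For step (i), I would use the Hilbert--Mumford criterion (Theorem \ref{thm:hbcriterion}). A cocharacter of $\GL_{n,\omega}$ factors as $\lambda(t)=\times_{i=1}^n\lambda_i(t)$, and, as noted in the paper just before the statement of Theorem \ref{thm:segrebound}, $\lim_{t\to 0}\lambda(t).\ott_{i=1}^n M_i=0$ forces $\lambda_i(t).M_i\to 0$ for some $i$, hence $M_i$ nilpotent. Because $\mathcal{V}_\mcL$ is a $\GL_{n,\omega}$-stable closed subcone of $\mathcal{V}$, the same conclusion holds there, and the null cone of $\mathcal{V}_\mcL$ is the set of $\ott_{i=1}^n M_i$ with at least one $M_i$ nilpotent. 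By Newton's identities, $M_i$ is nilpotent iff $\Tr(M_i^k)=0$ for all $k\in[\omega]$; the conditions with $k\in[\omega]\setminus w(\mcL)$ are forced on $\mathcal{V}_\mcL$, so nilpotency of $M_i$ there reduces to $\Tr(M_i^w)=0$ for all $w\in w(\mcL)$.

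For step (ii), each $\Tr_{d_1,\dots,d_n}$ is a homogeneous $\GL_{n,\omega}$-invariant of degree $d=\tn{lcm}(d_1,\dots,d_n)\le p:=\tn{lcm}\{w:w\in w(\mcL)\}$, and by its product form vanishes at $\ott_{i=1}^n M_i$ iff some $\Tr(M_i^{d_i})=0$. A short selection argument then shows that $\ott_{i=1}^n M_i$ is a simultaneous zero of all $\Tr_{d_1,\dots,d_n}$ with $(d_1,\dots,d_n)\in w(\mcL)^n$ iff there exists a single index $i_0$ with $\Tr(M_{i_0}^w)=0$ for every $w\in w(\mcL)$ -- i.e., iff it lies in the null cone of $\mathcal{V}_\mcL$. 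Hence this family cuts out the null cone set-theoretically, with maximum degree $p$.

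For step (iii), any $\GL(\C^\omega)^n$-invariant monomial on $\mathcal{V}$ compatible with the Segre rescaling (equivalently, any $\GL_{n,\omega}$-invariant monomial on $\mathcal{V}$) can be written as a product of terms $\Tr_{d_1,\dots,d_n}$ after balancing weights across the $n$ tensor factors, so these generate $\C[\mathcal{V}]^{\GL_{n,\omega}}$; on $\mathcal{V}_\mcL$ the tuples with some $d_i\notin w(\mcL)$ yield the zero function, leaving at most $|w(\mcL)|^n\le\omega^n$ generators, and hence $\dim\C[\mathcal{V}_\mcL]^{\GL_{n,\omega}}\le\omega^n$. Step (iv) follows from the isomorphism $\C[\mathcal{V}_\mcL]^{\GL_{n,\omega}}\isom\mcP_n[\mcL]^{G_n}$ noted earlier together with Hochster's theorem, since $\mcP_n[\mcL]$ is a polynomial ring by Proposition \ref{prop:poly}. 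Substituting $p$ and $\omega^n$ into Corollary \ref{cor:nullfinite} then gives the claimed bound. I expect step (ii) to be the main hurdle -- specifically, making precise that the Hilbert--Mumford destabilizations on $\mathcal{V}$ transfer directly to $\mathcal{V}_\mcL$, and that nilpotency is genuinely equivalent there to vanishing of the restricted trace list -- but both should reduce cleanly to the classical adjoint case once one uses that the extra trace conditions defining $\mathcal{V}_\mcL$ are themselves $\GL_{n,\omega}$-invariant.
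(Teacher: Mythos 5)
Your proof follows essentially the same route as the paper: apply Corollary \ref{cor:nullfinite} to $\mathcal{V}_\mcL$, identify the null cone via the Hilbert--Mumford criterion as the tensors $\ott_{i=1}^n M_i$ with some $M_i$ nilpotent, observe that on $\mathcal{V}_\mcL$ nilpotency of $M_i$ is detected by the traces $\Tr(M_i^w)$ with $w\in w(\mcL)$ alone, cut out the null cone with the $\Tr_{d_1,\dots,d_n}$ for $(d_1,\dots,d_n)\in w(\mcL)^n$ (degree at most $\tn{lcm}\{w\in w(\mcL)\}$), and get Cohen--Macaulayness from $\C[\mathcal{V}_\mcL]^{\GL_{n,\omega}}\isom\mcP_n[\mcL]^{G_n}$ together with Hochster--Roberts. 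Steps (i), (ii) and (iv) match the paper's argument in substance.

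The one genuine misstep is in your step (iii). The claim that every $\GL_{n,\omega}$-invariant monomial on $\mathcal{V}$ is a product of the $\Tr_{d_1,\dots,d_n}$ is false, and in fact contradicts Theorem \ref{thm:glinvs}: the generators $\Tr_\sigma$ allow each tensor factor to contribute a product of traces over the several cycles of $\sigma_i$, e.g.\ $\Tr(M_1^2)\Tr(M_1)\Tr(M_2^3)$ is an invariant of multidegree $(3,3)$ that cannot be written as a product of terms of the form $\Tr(M_1^{d_1})^{d/d_1}\Tr(M_2^{d_2})^{d/d_2}$. So your count of ``at most $|w(\mcL)|^n$ generators'' does not establish the dimension bound. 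This is repairable without changing anything else: the Krull dimension of $\C[\mathcal{V}_\mcL]^{\GL_{n,\omega}}$ is bounded by that of $\C[\mathcal{V}_\mcL]$, hence by quantities the paper bounds by $\omega^n$ directly (this is also how the paper handles it, via a crude ambient-space estimate rather than a generator count). With that substitution your argument is the paper's proof.
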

\begin{proof}
 We first prove that the vanishing locus of $\Tr_{d_1,\dots,d_n}$ is precisely that null cone. It is clear that every element of the null cone is in the vanishing locus of these polynomials. Now suppose $M=\ott_{i=1}^n{M_i}$ is not in the null cone. Then for each $i$, there is a $k_i\in[\omega]$ such that $\Tr(M_i^{k_i})\ne 0$ as $M_i$ is not nilpotent. Then $\Tr_{k_1,\dots,k_n}(M)\ne 0$. So the vanishing locus of these polynomials is precisely the null cone. But we note that by restricting these functions to the variety $\mathcal{V}_\mcL$, the $\Tr_{d_1,\dots,d_n}$ is identically zero unless all $d_i\in w(\mcL)$ and so can be excluded from the set of polynomials whose vanishing locus defines the null cone. Thus the degree of these polynomials is bounded by $\tn{lcm}\{w|\;w\in w(\mcL)\}$. 
 The dimension of $\C[\mathcal{V}_\mcL]^{\GL_{n,\omega}}$ is upper bounded by the dimension of $\C[\End(\C^w)^{\ot n}]=\omega^n$, completing the proof.
 \end{proof}

\begin{corollary}\label{cor:segrebound}
 If $\mcL$ is a set of rectangular polynomials all of unit height then $$\beta_n(\mcL)\le\tn{max}(2,\frac{3}{8}(\omega^n)\tn{lcm}\{w|\;w\in w(\mcL)\}^2)$$ where $\omega$ is the maximum width of any polyomino in $\mcL$.
\end{corollary}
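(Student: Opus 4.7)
The plan is to derive this corollary as a direct application of Theorem \ref{thm:segrebound} via the machinery of Section \ref{sec:toric}. Since every polyomino in $\mcL$ has the form $p_{1\times k}$, the Observation preceding Example \ref{ex:not-rep} immediately yields that $\mcL$ is $n$-representable for every $n \in \N$. By Theorem \ref{thm:main}, $\mcL$ is therefore strongly $n$-Noetherian, and the value $\beta_n(\mcL)$ is controlled by the degrees of generators of $\mcP_n[\mcL]^{G_n}$.

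First I would recall from the proof of Theorem \ref{thm:main} that, under $n$-representability, the $G_n$-invariant homogeneous monomials of multidegree $(m,\dots,m)$ in $\mcP_n[\mcL]^{G_n}$ are in bijection with equivalence classes of tilings of $R_{n\times m}$; moreover, the polynomial degree of such a monomial equals the area $nm$ of the tiled region. Any factorization of such a monomial into two $G_n$-invariant monomials of positive degree is forced by multihomogeneity to split the multidegree as $(m',\dots,m') + (m'',\dots,m'')$ with $m' + m'' = m$ and $m', m'' > 0$, which is precisely a subtiling. Hence if $\mcP_n[\mcL]^{G_n}$ is generated as a ring by elements of polynomial degree at most $D$, then every tiling of $R_{n\times m}$ with $nm > D$ must factor, yielding $\beta_n(\mcL) \le D/n \le D$.

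The final step is to invoke the isomorphism $\mcP_n[\mcL]^{G_n} \cong \C[\mathcal{V}_{\mcL}]^{\GL_{n,\omega}}$ established in the discussion preceding Theorem \ref{thm:segrebound}, which combines Theorem \ref{thm:glinvs} with the restriction to the $\GL_{n,\omega}$-stable subvariety $\mathcal{V}_{\mcL}$ that cuts out precisely the trace polynomials associated to widths in $w(\mcL)$. Theorem \ref{thm:segrebound} then delivers the bound $D = \max(2, \tfrac{3}{8}\omega^n \operatorname{lcm}\{w : w \in w(\mcL)\}^2)$ for that ring, which combined with the bookkeeping of the previous paragraph proves the corollary.

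Since the corollary is a straightforward assembly of results already proven, the ``main obstacle'' is really just the bookkeeping identifying polynomial-degree factorizations in $\mcP_n[\mcL]^{G_n}$ with geometric subtilings of $R_{n\times m}$ (in particular, checking that a width of $m$ really corresponds to polynomial degree $nm$, so that the degree bound $D$ translates into a width bound no worse than $D$). This identification is already implicit in the proofs of Proposition \ref{prop:just} and Theorem \ref{thm:main}, so no genuinely new idea is required.
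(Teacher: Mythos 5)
Your proposal is correct and follows the same route as the paper: the paper's proof of this corollary is a one-line citation of Theorem \ref{thm:segrebound} together with the identification of $\mcP_n[\mcL]^{G_n}$ with (a quotient of) $\C[\mathcal{V}_{\mcL}]^{\GL_{n,\omega}}$, with the $n$-representability of unit-height sets and the degree-to-width bookkeeping left implicit. You have simply made those implicit steps explicit, and your care about whether the degree bound should be read as $m$ or $nm$ is harmless since either reading yields the stated inequality.
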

\begin{proof}
 This follows from Theorem \ref{thm:segrebound} and the observation that $\mcP_n[\mcL]^{G_n}$ is isomorphic to a quotient of $\C[\mathcal{V}_\mcL]^{\GL_{n,\omega}}$.
\end{proof}

\section{Conclusion}

We end this paper with several questions. We introduced the notion of $n$-representable sets $\mcL$, and found some families of examples. However, we would be interested in knowing if other examples exist, possibly by choosing rectangles in $\mcL$ more carefully than simply restricting their heights. 

Of course, $n$-representability seems a much stronger condition than being strongly $n$-Noetherian. In our proofs, we only allowed horizontal translations. If all translations and rotations are allowed, it is tempting to believe that every set of rectangular polyominoes is $n$-Noetherian. However, it doesn't seem terribly unlikely that this is not the case as well. So a question of interest is if there exists a set of rectangular polyominoes $\mcL$ that are not (strongly or otherwise) $n$-Noetherian.

We restricted ourselves to rectangular polyominoes, but what if other relatively nice polyominoes are allowed. Does there exists a set of polyominoes $\mcL$ that includes a hook, for example, that are (strongly) $n$-Noetherian? We also looked only at vertical faults, but another natural variant is to ask whether a tiling of $R_{n\times m}$ can be rearranged to give a vertical \emph{or horizontal} fault. 

Lastly, while we were able to give upper bounds, they may be far from optimal. And we did not give any lower bounds. The least common multiple of the widths of polyominoes on $\mcL$ is an easy lower bound to see. However, it it s not clear how sharp this lower bound is in general.
\subsection*{Acknowledgments} The author would like to thank Viresh Patel, Guus Regts, and  Lex Schrijver for their helpful discussions. The research leading to these results has received funding from the European Research Council under the European Union's Seventh Framework Programme (FP7/2007-2013) / ERC grant agreement No 339109.
\bibliographystyle{plain}
\bibliography{bibfile}

\def\cdprime{$''$} \def\Dbar{\leavevmode\lower.6ex\hbox to 0pt{\hskip-.23ex
  \accent"16\hss}D} \def\cprime{$'$} \def\cprime{$'$} \def\cprime{$'$}
  \def\cprime{$'$} \def\Dbar{\leavevmode\lower.6ex\hbox to 0pt{\hskip-.23ex
  \accent"16\hss}D} \def\cprime{$'$}
\begin{thebibliography}{10}

\bibitem{barnes1982algebraic}
Frank~W Barnes.
\newblock Algebraic theory of brick packing i.
\newblock {\em Discrete Mathematics}, 42(1):7--26, 1982.

\bibitem{barnes1982algebraic2}
Frank~W Barnes.
\newblock Algebraic theory of brick packing ii.
\newblock {\em Discrete Mathematics}, 42(2-3):129--144, 1982.

\bibitem{beauquier1995tiling}
Dani{\`e}le Beauquier, Maurice Nivat, Eric R{\'e}mila, and Mike Robson.
\newblock Tiling figures of the plane with two bars.
\newblock {\em Computational Geometry}, 5(1):1--25, 1995.

\bibitem{berger1966undecidability}
Robert Berger.
\newblock {\em The undecidability of the domino problem}, volume~66.
\newblock American Mathematical Soc., 1966.

\bibitem{brion2008representations}
Michel Brion.
\newblock Representations of quivers.
\newblock 2008.

\bibitem{chambers1998practical}
Lance~D Chambers.
\newblock {\em Practical handbook of genetic algorithms: complex coding
  systems}, volume~3.
\newblock CRC press, 1998.

\bibitem{chen2004error}
Ho-Lin Chen and Ashish Goel.
\newblock Error free self-assembly using error prone tiles.
\newblock In {\em DNA computing}, pages 62--75. Springer, 2004.

\bibitem{conway1990tiling}
John~H Conway and Jeffrey~C Lagarias.
\newblock Tiling with polyominoes and combinatorial group theory.
\newblock {\em Journal of combinatorial theory, Series A}, 53(2):183--208,
  1990.

\bibitem{de1969filling}
Nicholas~G de~Bruijn.
\newblock Filling boxes with bricks.
\newblock {\em The American Mathematical Monthly}, 76(1):37--40, 1969.

\bibitem{Derksen2001polynomial}
Harm Derksen.
\newblock Polynomial bounds for rings of invariants.
\newblock {\em Proceedings of the American Mathematical Society},
  129(4):955--963, 2001.

\bibitem{derksen2013computational}
Harm Derksen and Gregor Kemper.
\newblock {\em Computational invariant theory}, volume 130.
\newblock Springer Science \& Business Media, 2013.

\bibitem{derksen1997constructive}
Harm Derksen and Hanspeter Kraft.
\newblock Constructive invariant theory, 1997.

\bibitem{gardner1969unexpected}
Martin Gardner.
\newblock {\em The Unexpected Hanging}.
\newblock JSTOR, 1969.

\bibitem{gardner1970mathematical}
Martin Gardner.
\newblock Mathematical games: The fantastic combinations of {J}ohn {C}onway's
  new solitaire game ``life".
\newblock {\em Scientific American}, 223(4):120--123, 1970.

\bibitem{gardner1975more}
Martin Gardner.
\newblock More about tiling the plane: the possibilities of polyominoes,
  polyiamonds, and polyhexes.
\newblock {\em Scientific American}, 233(2):112--115, 1975.

\bibitem{golomb1965polyominoes}
Solomon~W Golomb.
\newblock Polyominoes.
\newblock 1965.

\bibitem{golomb1989polyominoes}
Solomon~W Golomb.
\newblock Polyominoes which tile rectangles.
\newblock {\em Journal of Combinatorial Theory, Series A}, 51(1):117--124,
  1989.

\bibitem{graham1981fault}
Ron~L Graham.
\newblock Fault-free tilings of rectangles.
\newblock In {\em The Mathematical Gardner}, pages 120--126. Springer, 1981.

\bibitem{hilb:93}
D.~Hilbert.
\newblock \"{U}ber die vollen {I}nvariantensysteme.
\newblock {\em Math.~Ann.}, 42:313--373, 1893.

\bibitem{hilbert1890theorie}
David Hilbert.
\newblock {\"U}ber die theorie der algebraischen formen.
\newblock {\em Mathematische Annalen}, 36(4):473--534, 1890.

\bibitem{hochster1974rings}
Melvin Hochster and Joel~L Roberts.
\newblock Rings of invariants of reductive groups acting on regular rings are
  cohen-macaulay.
\newblock {\em Advances in mathematics}, 13(2):115--175, 1974.

\bibitem{peggeternity}
Ed~Pegg Jr.
\newblock The {E}ternity {P}uzzle.
\newblock \url{http://www.mathpuzzle.com/eternity.html}.
\newblock Accessed February, 2016.

\bibitem{karademir2015irregular}
Serdar Karademir, Oleg~A Prokopyev, and Robert~J Mailloux.
\newblock Irregular polyomino tiling via integer programming with application
  in phased array antenna design.
\newblock {\em Journal of Global Optimization}, pages 1--37, 2015.

\bibitem{karp1972reducibility}
Richard~M Karp.
\newblock {\em Reducibility among combinatorial problems}.
\newblock Springer, 1972.

\bibitem{kempf1978instability}
George~R Kempf.
\newblock Instability in invariant theory.
\newblock {\em Annals of Mathematics}, pages 299--316, 1978.

\bibitem{klarner1969packing}
David~A Klarner.
\newblock Packing a rectangle with congruent n-ominoes.
\newblock {\em Journal of Combinatorial Theory}, 7(2):107--115, 1969.

\bibitem{kraft2000classical}
Hanspeter Kraft and Claudio Procesi.
\newblock Classical invariant theory, a primer.
\newblock {\em Lecture Notes, Version}, 2000.

\bibitem{marshall1997packing}
William~Rex Marshall.
\newblock Packing rectangles with congruent polyominoes.
\newblock {\em Journal of Combinatorial Theory, Series A}, 77(2):181--192,
  1997.

\bibitem{martin1991polyominoes}
George Martin.
\newblock {\em Polyominoes: A guide to puzzles and problems in tiling}.
\newblock Cambridge University Press, 1991.

\bibitem{moore2001hard}
Cristopher Moore and John~Michael Robson.
\newblock Hard tiling problems with simple tiles.
\newblock {\em Discrete \& Computational Geometry}, 26(4):573--590, 2001.

\bibitem{nagata1963invariants}
Masayoshi Nagata.
\newblock Invariants of group in an affine ring.
\newblock {\em Kyoto Journal of Mathematics}, 3(3):369--378, 1963.

\bibitem{procesi1976invariant}
Claudio Procesi.
\newblock The invariant theory of n$\times$ n matrices.
\newblock {\em Advances in Mathematics}, 19(3):306--381, 1976.

\bibitem{robinson1982fault}
Peter~J Robinson.
\newblock Fault-free rectangles tiled with rectangular polyominoes.
\newblock In {\em Combinatorial Mathematics IX}, pages 372--377. Springer,
  1982.

\bibitem{robinson1971undecidability}
Raphael~M Robinson.
\newblock Undecidability and nonperiodicity for tilings of the plane.
\newblock {\em Inventiones mathematicae}, 12(3):177--209, 1971.

\bibitem{selbyeternity}
Alex Selby.
\newblock Eternity page.
\newblock \url{http://www.archduke.org/eternity}.
\newblock Accessed February, 2016.

\bibitem{takefuji2012neural}
Yoshiyasu Takefuji.
\newblock {\em Neural network parallel computing}, volume 164.
\newblock Springer Science \& Business Media, 2012.

\bibitem{turner2015complete}
Jacob Turner and Jason Morton.
\newblock A complete set of invariants for lu-equivalence of density operators.
\newblock {\em arXiv preprint arXiv:1507.03350}, 2015.

\end{thebibliography}

 \end{document}